\title{ On the non-persistence of Hamiltonian identity cycles }
\author{{\sc  L. Gavrilov$^{\ (1)}$, H. Movasati$^{\ (2)}$ and I. Nakai$^{\ (3)}$}}
\begin{document}
\maketitle

\date{}
\begin{center}
\noindent {\small{$^{\ (1)}$ Institut de Math\'{e}matiques de Toulouse, UMR 5219\\
 Universit\'{e}  de Toulouse,  31062 Toulouse,  France  }\\
$^{\ (2)}$ Instituto de Matem\'atica Pura e Aplicada (IMPA) \\
Estrada Dona Castorina 110,  22460-320,
Rio de Janeiro, Brasil \\
$^{\ (3)}$ Department of Mathematics, Ochanomizu University\\
2-1-1 Otsuka, Bunkyo-ku,
Tokyo 112-8610, Japan.
}
\end{center}

\begin{abstract}
We study the leading term of the holonomy map of a perturbed plane polynomial
Hamiltonian foliation. The non-vanishing of this term implies the
non-persistence of the corresponding Hamiltonian identity cycle. We prove that
this does happen for generic perturbations and cycles, as well for cycles
which are commutators in Hamiltonian foliations of degree two. Our approach
relies on the Chen's theory of iterated path integrals which we briefly resume.
\end{abstract}

\newtheorem{theo}{Theorem}
\newtheorem{lem}{Lemma}
\newtheorem{exam}{Example}
\newtheorem{coro}{Corollary}
\newtheorem{defi}{Definition}
\newtheorem{axio}{I}

\newtheorem{prob}{Problem}
\newtheorem{lemm}{Lemma}
\newtheorem{prop}{Proposition}
\newtheorem{rem}{Remark}
\newtheorem{conj}{Conjecture}
\def\End{{\mathrm End}}              
\def\hol{{\mathrm Hol}}
\def\sing{{\mathrm Sing}}            
\def\cha{{\mathrm char}}             
\def\Gal{{\mathrm Gal}}              
\def\jacob{{\mathrm jacob}}          
\newcommand\Pn[1]{\mathbb{P}^{#1}}   
\def\Z{\mathbb{Z}}                 
\def\ZZ{\mathbb{Z}}               
\def\Q{\mathbb{Q}}                   
\def\C{\mathbb{C}}                   
\def\R{\mathbb{R}}                   
\def\N{\mathbb{N}}                   
\def\A{\mathbb{A}}                   
\def\uhp{{\mathbb H}}                
\newcommand\ep[1]{e^{\frac{2\pi i}{#1}}}
\def\Mat{{\mathrm Mat}}              
\newcommand{\mat}[4]{
     \begin{pmatrix}
            #1 & #2 \\
            #3 & #4
       \end{pmatrix}
    }                                
\newcommand{\matt}[2]{
     \begin{pmatrix}                 
            #1   \\
            #2
       \end{pmatrix}
    }
\def\ker{{\mathrm ker}}              
\def\cl{{\mathrm cl}}                
\def\dR{{\mathrm dR}}                

\def\hc{{\mathsf H}}                 
\def\Hb{{\cal H}}                    
\def\GL{{\mathrm GL}}                
\def\pedo{{\cal P}}                  
\def\PP{\tilde{\cal P}}              
\def\cm {{\cal C}}                   
\def\K{{\mathbb K}}                  
\def\F{{\cal F}}                     
\def\M{{\cal M}}
\def\RR{{\cal R}}
\newcommand\Hi[1]{\mathbb{P}^{#1}_\infty}
\def\pt{\mathbb{C}[t]}               
\def\W{{\cal W}}                     
\def\Af{{\cal A}}                    
\def\gr{{\rm gr}}                
\def\ring{{\sf R}}
\def\BM{{\sf H}}
\def\OO{{\cal O}}
\def\rank{{\rm rank }}

\def\ii{{\sf i}}
\def\k{\mathsf{k}}
\def\K{\mathsf{K}}
\def\span{{\rm Span}}
\section{Introduction}

Let  $\mathcal{F}^n$ be a degree $n$ polynomial foliation   on the plane
$\C^2$.
 A cycle on a leaf is a non-zero free homotopy class of closed loops on this leaf.
 Let $\delta$ be a closed non-contractible loop on a leaf and
denote the holonomy map associated to $\delta$ by $h_\delta$. The cycle
represented by $\delta$ is said to be an \emph{identity cycle} provided that
$h_\delta$ is the identity map, and \emph{Hamiltonian identity cycle}, provided
that $\mathcal{F}^n = \{ df=0\}$. The present paper is motivated by the following:
\begin{conj}\rm
\label{conj1}
A generic polynomial foliation on  $\C^2$ can not have identity cycles.
\end{conj}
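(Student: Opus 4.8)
The plan is to approach the conjecture through its sharpest special case, the \emph{non-persistence} of Hamiltonian identity cycles, and to control the holonomy of a perturbed foliation by its leading-order term. A Hamiltonian foliation $\F^n=\{df=0\}$ admits the first integral $f$, so its holonomy along every leafwise loop is trivial and \emph{every} cycle is an identity cycle; these foliations therefore form the critical locus where identity cycles concentrate. Accordingly I would fix a generic $f$ of degree $n+1$ with a regular value $t_0$, a smooth fiber $L=\{f=t_0\}$, a cycle $\delta\subset L$, and a one-parameter deformation $\F^n_\varepsilon=\{df+\varepsilon\eta=0\}$ with $\eta$ a polynomial $1$-form. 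Taking the value of $f$ as a transverse coordinate, the holonomy $h_\delta^\varepsilon$ of the continued cycle $\delta(t)$ is analytic in $\varepsilon$, $h^0_\delta=\mathrm{id}$, and I expand
\[
h_\delta^\varepsilon(t)=t+\sum_{k\ge 1}\varepsilon^k M_k(t).
\]
The statement ``$\delta$ is an identity cycle of $\F^n_\varepsilon$'' is exactly $M_k\equiv 0$ for all $k$, so it suffices to exhibit, for generic data, a first nonvanishing coefficient $M_N$.

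Second, I would compute the $M_k$ as iterated path integrals. Realizing $h_\delta^\varepsilon$ via Chen's path-ordered-exponential formula expresses each $M_k(t)$ as a sum of iterated integrals $\int_{\delta(t)}\omega_1\cdots\omega_k$ of $1$-forms built from $\eta$ and $df$. The leading term is $M_1(t)=\oint_{\delta(t)}\eta$, the Poincar\'e--Pontryagin--Melnikov integral; when it vanishes identically (so that $\eta=dg+R\,df$ along the relevant tube) the next coefficient becomes an iterated, rather than ordinary, integral, and Chen's theory provides uniform bookkeeping for all orders through the lower central series filtration of $\pi_1(L)$.

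Third, the genericity. By Chen's $\pi_1$--de Rham theorem iterated integrals pair perfectly with the associated graded of the group algebra of $\pi_1(L)$, so for a fixed nontrivial $\delta$ there exist closed $1$-forms whose (iterated) integral over $\delta$ is nonzero; extending these to polynomial forms on the ambient $\C^2$ and choosing $\eta$ accordingly would force the corresponding $M_N\not\equiv 0$. For the explicit degree-two case I would take $\delta=[\alpha,\beta]$ a commutator on a Hamiltonian fiber: since $\delta$ is null-homologous it bounds a surface $\Sigma$, and the length-two bracket pairs through
\[
\int_{\delta}\omega_1\omega_2-\int_{\delta}\omega_2\omega_1=\int_{\Sigma}\omega_1\wedge\omega_2,
\]
a quantity computable from the (hyper)elliptic geometry of the quadratic level curve and which can be shown nonzero for a suitable quadratic $\eta$.

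The main obstacle is the quantifier. Non-persistence along one deformation direction only shows that identity cycles are destroyed for a generic $\eta$, whereas the conjecture asserts the absence of identity cycles for a generic foliation, i.e. the joint unsatisfiability of the infinite system $\{M_k\equiv 0\}_{k\ge 1}$ off a thin set. The delicate point will be excluding ``miraculous'' total cancellations, where every Chen integral vanishes without the holonomy being forced trivial by an actual first integral; I expect this to require a lower bound on the length of the first nonvanishing iterated integral together with a transversality/stratification argument on the space of pairs (foliation, free-homotopy class). I anticipate that this is tractable only for the generic and the commutator cases treated above, and that the full conjecture remains out of reach by this method.
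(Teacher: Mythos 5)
First, be clear about the status of the statement: it is a conjecture, and neither the paper nor you prove it. Your own closing paragraph concedes the decisive point --- that non-persistence of identity cycles under perturbations $df+\varepsilon\eta$ of the Hamiltonian locus does not yield the absence of identity cycles for a \emph{generic} foliation --- so what you have written is a research programme, not a proof. As a programme it coincides with the paper's: the expansion $h^\varepsilon_\delta(t)=t+\sum_{k}\varepsilon^k M_k(t)$, the representation (\ref{baazdele}) of the first possibly nonvanishing coefficient as an iterated path integral, the $\pi_1$ de Rham theorem as the pairing device, and the two cases you single out are exactly the paper's Theorem \ref{th4} (generic data) and Theorem \ref{elliptic} (commutator cycles, $\deg f=3$). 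But these are partial results toward the restricted Conjecture \ref{conj3}, not toward Conjecture \ref{conj1} itself, and the paper presents them as such.

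Second, one of your steps is not merely incomplete but false as stated. In your third step you claim that, for a fixed nontrivial cycle $\delta$, the perfect pairing given by Chen's theorem plus ``extending these to polynomial forms'' produces a degree-$n$ form $\eta$ with $M_N\not\equiv 0$. The paper's Example \ref{ex} refutes precisely this: for $f$ of degree $n+1$ with $n^2$ distinct critical values and the weight-five basic commutator $\gamma(t)=(((\alpha_1,\alpha_2),\alpha_1),(\alpha_1,\alpha_2))$ built from loops around two points at infinity, one has $M_5\equiv 0$ for \emph{every} polynomial one-form of degree at most $n$. The reason is that $M_k$ is not an arbitrary length-$k$ iterated integral but has the constrained shape $\int_\delta\omega(\omega(\cdots(\omega)')'\cdots)'$, and the derived forms $\omega^{(k)}$, $k\geq 2$, carry no residues; the finite-dimensional space of admissible polynomial forms therefore does not realize all of ${\rm Hom}_\Z(\gr^k F_t,\C)$. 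This is exactly why Theorem \ref{th4} quantifies the other way around (for generic $(f,\omega)$ there \emph{exists} a cycle with $M_k\not\equiv 0$): any correct argument must be allowed to vary the cycle, not only the form. Separately, your formula $\int_\delta\omega_1\omega_2-\int_\delta\omega_2\omega_1=\int_\Sigma\omega_1\wedge\omega_2$ cannot be right for the holomorphic forms you intend to use: on a curve $\omega_1\wedge\omega_2\equiv 0$ pointwise, whereas the left-hand side equals (up to a factor) the period determinant $\int_\alpha\omega_1\int_\beta\omega_2-\int_\beta\omega_1\int_\alpha\omega_2$, which is generically nonzero --- indeed the nonvanishing of exactly this determinant is what the paper establishes in Theorem \ref{elliptic}, via the monodromy argument of Proposition \ref{mon} and a residue computation, not via Stokes' theorem on a bounding surface.
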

which follows on its turn from the following more general:
\begin{conj}\rm
\label{conj2}\cite[p.4]{ilpreface}, (D.V. Anosov, early 1960s)
For a generic polynomial foliation by analytic curves on $\C^k$, all leaves are topological discs except for a
countable number of topological cylinders.
\end{conj}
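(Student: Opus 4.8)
The plan is to reduce the classification of leaves to the algebra of the holonomy pseudogroup and then to use the non-persistence of identity cycles---the mechanism of this paper---as the engine that converts that algebra into topology. Fix a transversal disc $\Sigma$ and let $\mathcal{G}\subset\mathrm{Diff}(\Sigma,0)$ denote the holonomy pseudogroup of $\F$. For a leaf $L$ meeting $\Sigma$ at $p$, every loop in $L$ based at $p$ produces a germ fixing $p$, and conversely each pseudogroup germ fixing $p$ is the holonomy of a loop in $L$; hence the holonomy homomorphism $\pi_1(L,p)\to\mathrm{Hol}(L)$ has image the stabiliser of $p$ in $\mathcal{G}$. Since a topological disc is exactly a leaf with $\pi_1(L)=1$ and a topological cylinder one with $\pi_1(L)\cong\Z$, the conjecture becomes a statement about these stabilisers.

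The first step is to observe that the absence of identity cycles makes $\pi_1(L,p)\to\mathrm{Hol}(L)$ an \emph{isomorphism}, so that the topology of $L$ is read off directly from the stabiliser of $p$. If $p$ is not a periodic point of $\mathcal{G}$---that is, no non-trivial word $g\in\mathcal{G}$ satisfies $g(p)=p$---then the stabiliser is trivial and $L$ is a disc. This already yields the bulk of the statement: every leaf through a non-periodic transverse point is a topological disc.

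The second step is to count and to identify the exceptional leaves, which are precisely those through the periodic points of $\mathcal{G}$. A periodic point is a solution of $g(z)=z$ for some word $g$ in the generators of the pseudogroup; there are countably many such words, and for each the equation $g(z)-z=0$ is analytic and, since $g$ is not the identity germ (again using the absence of identity cycles), has only isolated solutions. Therefore the periodic points, and with them the non-disc leaves, form a countable set. It remains to see that each such leaf is a cylinder: generically the stabiliser of a periodic point is generated by a single germ---two independent germs sharing a fixed point being a higher-codimension coincidence that a perturbation destroys---so the stabiliser is infinite cyclic and $L$ is a topological cylinder, the complex limit cycle born at that fixed point. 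Ruling out stabilisers of rank $\geq 2$ is where the leading-term computation re-enters, certifying that no relation forcing a second independent cycle survives a generic perturbation.

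The hard part, and the reason this remains Anosov's conjecture, is that the entire argument rests on the \emph{global} non-persistence of identity cycles, whereas the leading-term analysis supplies it only \emph{perturbatively} and for one cycle at a time near a Hamiltonian model. Upgrading ``the leading term of $h_\delta-\mathrm{id}$ is non-zero'' to ``$h_\delta\neq\mathrm{id}$ for every non-trivial $\delta$ on every leaf of a generic $\F$'' requires controlling the holonomy under analytic continuation along arbitrarily long loops that degenerate toward the singular points and the line at infinity---control that Chen's iterated integrals do not provide away from a fixed neighbourhood of the unperturbed foliation. A secondary obstacle is making the pseudogroup and its periodic points well defined globally, so that the countability argument is legitimate rather than formal. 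I expect the first of these to be decisive: every known partial result halts exactly at the passage from the germ-level non-vanishing of a single identity cycle to the global topology of all leaves.
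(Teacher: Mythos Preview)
The statement you were asked to prove is labeled \emph{Conjecture} in the paper (Conjecture~\ref{conj2}, attributed to Anosov), and the paper does not prove it. It is invoked purely as motivation: the authors note that Conjecture~\ref{conj1} on identity cycles follows from it, and then retreat to the still weaker Conjecture~\ref{conj3} about Hamiltonian identity cycles, which is what the paper actually studies via the leading term $M_k$. There is therefore no proof in the paper against which to compare your proposal.

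Your write-up is not a proof either, and to your credit you say so explicitly: the final two paragraphs identify the genuine obstructions (passing from the perturbative non-vanishing of $M_k$ for one cycle near a Hamiltonian model to the global statement $h_\delta\neq\mathrm{id}$ for every non-trivial $\delta$ on every leaf of a generic foliation, and making the global pseudogroup and its periodic points well defined). That diagnosis is accurate and is exactly why the statement remains open. The earlier heuristic---trivialise the kernel of the holonomy representation, reduce to periodic points of the pseudogroup, count words---is a reasonable outline of how one expects a proof to go, but each step presupposes the global absence of identity cycles (Conjecture~\ref{conj1}), which is itself open and strictly stronger than anything established in the paper. In short: there is no gap to name beyond the one you already name, and there is nothing in the paper to compare it to.
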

Conjecture \ref{conj1} has its origin in the  the famous Petrovskii-Landis
paper \cite{pe55}, see the comments of Ilyashenko in \cite{il96}.
We consider a restricted version of Conjecture \ref{conj1}:
\begin{conj}\rm
\label{conj3} Let $\delta$ be a Hamiltonian identity cycle. There is a
sufficiently small degree $n$ perturbation of $\mathcal{F}^n$ which either
destroys  $\delta$, or makes it a limit cycle.
\end{conj}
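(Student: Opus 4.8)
The plan is to turn the dynamical dichotomy ``destroyed or limit cycle'' into the non-vanishing of a single period-type invariant of $\delta$, and then to establish that non-vanishing. First I would record that the Hamiltonian hypothesis is exactly what makes $\delta$ an identity cycle: since the level sets of $f$ are the leaves, the first integral trivializes parallel transport, so for \emph{every} loop on a leaf the holonomy is the identity. Fixing a transversal $\Sigma$ to the leaf $L \ni \delta$, parametrized by the value $t$ of $f$, I would consider a one-parameter deformation $\mathcal{F}_\epsilon = \{ df + \epsilon\,\omega + O(\epsilon^2) = 0\}$ by polynomial $1$-forms of degree $\le n$, and write its holonomy along $\delta$ as a germ $h^\epsilon_\delta(t)$ with $h^0_\delta = \mathrm{id}$. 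Persistence of $\delta$ as an identity cycle is then precisely the condition that the displacement $d^\epsilon_\delta(t) = h^\epsilon_\delta(t) - t$ vanish identically in $t$ for small $\epsilon$, so Conjecture \ref{conj3} for this $\delta$ reduces to exhibiting one deformation with $d^\epsilon_\delta \not\equiv 0$.

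Second, I would expand $d^\epsilon_\delta(t)$ in powers of $\epsilon$ and identify the first non-zero coefficient with a Chen iterated path integral along $\delta$. The length-one term is the abelian integral $M_1(t) = \int_{\delta_t}\omega$ over the continuation $\delta_t \subset \{f=t\}$ of $\delta$; since the restriction of a holomorphic $1$-form to a leaf is closed, $M_1$ depends only on the homology class $[\delta_t]$. This is where the two cases split. For a homologically non-trivial cycle the length-one term already carries information, whereas for a commutator $\delta = [\alpha,\beta]$ one has $[\delta_t] = 0$, so $M_1 \equiv 0$ and the leading term is pushed up to an iterated integral of length two; Chen's shuffle formalism and the homotopy-functoriality of iterated integrals are precisely what guarantee that this leading coefficient is a well-defined invariant of $\delta$, independent of the auxiliary choices (base point, transversal, primitivization).

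Third, and this is where I expect the real work to lie, I would prove the leading coefficient is not identically zero as $\omega$ ranges over admissible deformations. In the generic case this should follow softly: $\omega \mapsto M_1$ is linear in the coefficients of $\omega$, and for a generic homologically non-trivial $\delta$ some $\omega$ pairs non-trivially with $[\delta_t]$, so the locus $\{M_1 \equiv 0\}$ is a proper linear subspace and its complement is Zariski dense, whence a generic $(\omega,\delta)$ works. The hard case is $\delta = [\alpha,\beta]$ in a degree-two Hamiltonian foliation, whose leaves are elliptic curves. Here I would use the commutator structure to reduce the length-two iterated integral to an antisymmetrized combination of ordinary periods, schematically $\int_\alpha \omega_1 \int_\beta \omega_2 - \int_\alpha \omega_2 \int_\beta \omega_1$, and then evaluate these elliptic periods explicitly for a convenient cubic and a convenient pair of generators, the crux being to rule out an accidental cancellation forced by the period relations of the elliptic curve.

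Finally I would translate non-vanishing back into dynamics. If the leading coefficient of $d^\epsilon_\delta(t)$ is not identically zero, then for small $\epsilon \neq 0$ the holonomy $h^\epsilon_\delta$ is not the identity, and exactly one of two things occurs according to the value at the base point: either $d^\epsilon_\delta$ does not vanish at $t=0$, so $\delta$ is no longer a closed loop on a leaf and is destroyed, or $t=0$ stays an isolated zero of $d^\epsilon_\delta$, so $\delta$ survives as a limit cycle. The decisive and most delicate step remains the non-vanishing of the length-two iterated integral in the degree-two commutator case: it is a transcendence statement about iterated elliptic periods that is easy to believe but must be secured by an honest computation rather than a homological count.
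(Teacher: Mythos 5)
There is a genuine gap, and it is fatal to the strategy as a proof of Conjecture \ref{conj3}. Your entire plan rests on the reduction ``the conjecture follows if, for the given cycle $\delta$, some admissible degree-$n$ form $\omega$ makes the leading Poincar\'e--Pontryagin--Melnikov coefficient $M_k\not\equiv 0$.'' That reduction is sound as a \emph{sufficient} condition (and it is exactly the paper's framework), but it cannot be carried out for every Hamiltonian identity cycle, and the paper itself exhibits the obstruction (Example \ref{ex}): for $f$ of degree $n+1$ with $n^2$ distinct critical values, take $\gamma(t)=(((\alpha_1(t),\alpha_2(t)),\alpha_1(t)),(\alpha_1(t),\alpha_2(t)))$, a basic commutator of weight $5$ built from loops around two punctures of $\{f=t\}$. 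Then for \emph{every} polynomial one-form $\omega$ of degree at most $n$ one has $M_5\equiv 0$, because the derived forms $\omega^{(k)}$, $k\geq 2$, have no residues, so every factor $\int_{\alpha_i(t)}\omega^{(k)}$ appearing in the iterated-integral expansion of $M_5$ along $\gamma$ vanishes. Since on a Hamiltonian leaf every cycle is an identity cycle, Conjecture \ref{conj3} quantifies over such $\gamma$ as well, and for them ``make the leading coefficient nonzero'' is simply impossible; $M_k\equiv 0$ gives no information about the holonomy, so your argument stalls exactly where the conjecture is hard. This is why the statement remains a conjecture in the paper: what is actually proved is only Proposition \ref{M1} (homologically nontrivial cycles), Theorem \ref{th4} (generic $(f,\omega)$ and \emph{some} cycle in each $\gr^k F_t$, not all), and Theorem \ref{elliptic} (commutators when $\deg f=3$); for $n>2$ the conjecture is a theorem of Ilyashenko and Pyartli \cite{ilpy} by an entirely different, non-perturbative method.

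A secondary problem is your treatment of the one hard case you do isolate, the commutator cycle over elliptic fibers. You propose to evaluate the length-two integral as $\int_\alpha\omega_1\int_\beta\omega_2-\int_\alpha\omega_2\int_\beta\omega_1$ for ``a convenient cubic and a convenient pair of generators'' and to rule out cancellation by a transcendence-type computation with elliptic periods. But Theorem \ref{elliptic} must handle \emph{every} nonzero class in $F^2_X/F^3_X$, not a convenient one, and no transcendence is needed. The paper first uses the monodromy action (Proposition \ref{mon}, via Picard--Lefschetz on the $D_4$ configuration of vanishing cycles) to send an arbitrary nonzero class to a nonzero integer multiple of $[\alpha_1,\alpha_2]$, the commutator of two puncture loops; analytic continuation then reduces the vanishing of $M_2$ to the vanishing of $F_\omega(t)=\int_{[\alpha_1(t),\alpha_2(t)]}\omega\omega'$, which is a $2\times 2$ determinant of residues, \emph{linear} in $t$. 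Its identical vanishing for all $\omega$ in the three-dimensional space spanned by $\omega_{00}',\omega_{10}',\omega_{01}'$ would force a constant-coefficient combination of $\omega_{10}',\omega_{01}'$ to be residue-free, contradicting their linear independence --- pure linear algebra, no period computation. Your proposal leaves this step as an unexecuted computation, and the computation you envisage (a single explicit cubic) would not even yield the stated generality.
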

For $n>2$ Conjecture \ref{conj3} is a Theorem as proved by Ilyashenko and
Pyartli \cite[Theorem 2]{ilpy}. In the present paper we propose a different
approach to Conjecture \ref{conj3} based on a well known  integral formula for
the dominant term of the asymptotic expansion of the perturbed holonomy map.
Namely, let
$$
h^\varepsilon_\delta(t)= t + \varepsilon^k M_k(t) + o(\varepsilon^k)
$$
be the asymptotic expansion of the holonomy map $h^\varepsilon_\delta$ associated to the perturbed foliation
$$
df + \varepsilon \omega = 0
$$
and to a cycle of the non-perturbed foliation $\{df=0\}$ (here $\omega$ is a polynomial form). Clearly
Conjecture \ref{conj3} would follow from
 $h^\varepsilon_\delta\neq id$ which on its turn is a consequence of $M_k\not = 0$. The function $M_k$,
 the so called Poincar\'{e}-Pontryagin-Melnikov function, has an integral representation in terms of iterated path
 integrals of length at most $k$, see \cite{ga06}. Let $(F_t^n)_{n\geq 1}$ be the lower central series of the
 fundamental group $F_t$ of the fiber $f^{-1}(t)$. The properties of iterated integrals imply that if $\delta
 \in F_t^k$ then $M_i=0$ for $i=1,2,..., k-1$. The Poincar\'{e}-Pontryagin-Melnikov function $M_k$ is therefore a
 kind of "linearization" of the holonomy map with respect to the deformation parameter $\varepsilon$. We might
 expect that, at least for generic $\omega$, the function $M_k$ is non zero. This claim, if true, would be
 stronger than the claim of Conjecture \ref{conj3}. We shall show, however,  that for $k\geq
 5$ there exist cycles $\delta\in F_t^k$ such that for every polynomial one-form $\omega$ of degree at most $n$,
 holds $M_k=0$ (Example \ref{ex}).

The present paper is devoted to the study of the Poincar\'{e}-Pontryagin-Melnikov
 function $M_k$. Our first result is that for generic $f$, $\omega$ and for
 almost all identity cycles $\delta\in F_t^k/F_t^{k+1}$, the function $M_k$ is
 not identically zero (Theorem \ref{th4}). We investigate then in more details the
 case $n=2$ (the fibers $f^{-1}(t)$ are elliptic curves). We prove that if
 $\delta \in F_t^2/F_t^{3}$ (e.g. $\delta=aba^{-1}b^{-1}$ is a commutator),
 then $M_2\neq 0$ for almost all degree $n$ one-forms $\omega$ (Theorem
 \ref{elliptic}). In particular the identity cycle $\delta$ is destroyed.

The proof of the above results relies in an essential way on the Chen's theory
of interated path integral. We formulate and prove the relevant results in
\S \ref{mainsection}, which can be read independently. The main results
here are Theorem \ref{main} and Theorem \ref{prop1} (the so called $\pi_1$ de
Rham theorem). These results are not completely new, see the paper of R. Hain
\cite{hain86}. Our presentation, as well the proofs are different, and we hope
simpler, as based on a classical combinatorial theorem of Ree \cite{re58}. To
the end of this Introduction we resume \S \ref{mainsection}.

Let $\Gamma= \bar{\Gamma}\setminus S$, where $ \bar{\Gamma}$ is a compact
Riemann surface and $S\subset  \bar{\Gamma}$ a non-empty finite set of points.
Each holomorphic one-form $\omega$ in $\Gamma$ defines a linear map $\int\omega \in
{\rm Hom}_\Z(H_1(\Gamma,\Z),\C)$ by integration:
$$
\int\omega: H_1(\Gamma,\Z)\rightarrow \C: \delta \mapsto \int_\delta\omega
$$
and he classical de Rham theorem stipulates that $ {\rm Hom}_\Z(H_1(\Gamma,\Z),\C)$
is generated by such linear maps. In the context of the present paper
$\Gamma=f^{-1}(t)$ is a leaf of the non-perturbed foliation $\{df=0\}$ and
$\int_\delta\omega$ is the first Poincar\'{e}-Pontryagin-Melnikov function $M_1$.

The Chen's de Rham theorem generalizes  the de Rham theorem as follows. The
fundamental group $F=\pi_1(\Gamma,*)$  is free  finitely generated.
 For $A,B \subset F$ we denote by $(A,B)$ the
subgroup of $F$ generated by commutators $(a,b)=aba^{-1}b^{-1},  a\in A,b \in B$.
 Define by induction the free abelian  subgroups
$F^k= (F,F^{k-1})$, $F^1= F$. The Chen's de Rham theorem, e.g. \cite{hain}, claims that for each $k$
the vector space ${\rm  Hom}_\Z(F^k/F^{k+1},\C)$
is generated by iterated integrals of length $k$. As $F^1/F^{2}=
H_1(\Gamma,\Z)$ then in the case $k=1$ we get the usual de Rham theorem. In the
context of the present paper, the Poincar\'{e}-Pontryagin-Melnikov function $M_k$
defines an element of ${\rm Hom}_\Z(F^k/F^{k+1},\C)$.

The purpose of \S \ref{mainsection} is to prove a more precise version of
this "$\pi_1$ de Rham theorem" by constructing explicitly the space
${\rm Hom}_\Z(F^k/F^{k+1},\C)$ in terms of iterated integrals along Lie elements of
length $k$ (see Theorem \ref{main}). The construction is purely combinatorial,
the only analytic ingredient in the proof being the usual de Rham theorem.
Therefore we can formulate the result in algebraic terms as follows:

Let $X=\{x_1,x_2,...,x_m\}$ be a set
(which does not represent
necessarily loops and one-forms) and $\k\subset \K$  two  fields of characteristic zero.
Denote by ${\rm Ass}_X$ the graded $\k$-algebra of
associative but non-commutative polynomials in variables $x_1,x_2,\dots,x_m$ and by $L_X \subset
{\rm Ass}_X$ the graded Lie algebra generated by $x_1,\dots,x_m$.  Let also
$F_X$ be a free group generated by the elements of $X$.
By an iterated integral we mean
any  map
$$
\begin{array}{rcl}
\int:  F_X \times {\rm Ass}_X &\rightarrow& \K\\
     (\delta,\omega)&\mapsto& \int_{\delta}\omega
\end{array}
$$
which is $\k$-linear in ${\rm Ass}_X$ and satisfies the four axioms given in the next section. It will follow
from these axioms that $\int$ induces
 a well-defined map:
\begin{equation}
\label{toul}  F_X^k/ F_X^{k+1} \times {\rm Ass}_X^k \to \K,\ \ k=1,2,\ldots
\end{equation}
which is $\Z$-linear in the first coordinate and $\k$-linear in the second
coordinate. Of course an example of such a map is the usual path integrals of
Chen mentioned above. The $\pi_1$ de Rham theorem can be formulated as follows:
If the vector space of $\Z$-linear maps
$$
\label{h1}
\begin{array}{rcl}
\int\omega:  F_X/ F_X ^2&\rightarrow& \K, \mbox{  where  }\omega \in L_X^1 \\
     \delta&\mapsto& \int_{\delta}\omega
\end{array}
$$
is isomorphic to ${\rm Hom}_\Z( F_X/ F_X^2,\K)$, then the vector space of $\Z$-linear maps
$$
\begin{array}{rcl}
\int\omega:  F_X^k/F_X^{k+1} &\rightarrow& \K , \mbox{  where  } \omega \in L_X^k \\
     \delta&\mapsto& \int_{\delta}\omega
\end{array}
$$
is isomorphic to
 ${\rm Hom}_\Z(F_X^k/F_X^{k+1},\K)$ for all $k=1,2,\dots$.  We also prove that $\int \omega,\
 \omega \in {\rm Ass}_X^k$ is identically zero if and only if $\omega$ is a linear combination of
 shuffle elements.

{\it Acknowledgments.} Part of this paper was written while the authors were
visiting subsequently  IMPA of Rio de Janeiro (Brasil), the Ochanomizu
University, Tokyo (Japan) and the University of Toulouse (France). They are
obliged for the hospitality.

\section{Iterated path integrals}
\label{noandde}
Let $\k$ be a field. All modules and algebras are taken over $\k$, unless stated otherwise. For a set
$X=\{x_1,x_2,...,x_m\}$, let ${\rm Ass}_X$ be the graded free associative algebra on $X$.
Its elements are the non-commutative polynomials in $x_i$ with coefficients in $\k$. Define a Lie bracket in ${\rm Ass}_X$ by
$[x,y]=xy-yx$, and let $L_X\subset {\rm Ass}_X$ be the graded free Lie algebra on $X$.
 Thus, for instance, $x_1, [x_1,x_2], [[x_1,x_2],x_3]$ belong to $L_X$ but not
$1$, $x_1x_2$, $x_1x_2x_3$. ${\rm Ass}_X$ is the universal  algebra of the Lie algebra $L_X$,
see \cite{se64} .
The graded piece $L_X ^1$ is
just the $\k$-vector space generated by $x_1,x_2,...,x_m$, $L_X ^2$ is the $\k$-vector space
generated by $[x_i,x_j]$ etc.
Each
element of $L_X$ is called a Lie element. We denote by ${\rm Ass}_X^k$ (resp. $L_X^k$) the homogeneous component of degree
$k$ of ${\rm Ass}_X$ (resp. $L_X$).

Let $F_X$ be the free group on $X$, its elements are the words in the letters
$x_i$ and their formal inverses $x_i^{-1}$. For $x,y\in F_X$ we define the
commutator $(x,y)= xyx^{-1}y^{-1}$. For $A,B \subset F_X$ we denote by $(A,B)$
the subgroup of $F_X$ generated by commutators $(a,b),  a\in A,b \in B$.
Consider the lower central series $F_X^n$ of $F_X$, where $F_X^n=
(F_X,F_X^{n-1})$, $F_X^1= F_X$. The associated graded $\Z$-Lie algebra is given
by
\begin{equation}\label{graded}
    \gr F_X = \sum_{n=1}^\infty \gr^n F_X,\quad \gr^n F_X = F_X^{n}/F_X^{n+1},
\end{equation}
$$
  [xF^{i+1}_X,yF_X^{j+1}]=(x,y)F^{i+j+1}_X.
$$
The canonical map $X\to \gr^1F_X$ which send $x_i$ to $x_i$ induces an isomorphism of Lie algebras
\begin{equation}
\label{phimap}
\phi: L_X \to (\gr F_X)\otimes_\Z\k
\end{equation}
(e.g.\cite{se64} Theorem 6.1).
For two words $\omega_{1}\cdots \omega_{r}$, $\omega_{{r+1}} \cdots \omega_{{r+s}}$ define the shuffle product
$\omega_{1}\cdots \omega_{r} * \omega_{{r+1}} \cdots \omega_{{r+s}}$ to be  the sum of all words of length $r+s$
that are permutations of $\omega_{1}\cdots \omega_{r}  \omega_{{r+1}} \cdots \omega_{{r+s}}$ such that both
$\omega_{1}\cdots \omega_{r}$ and $\omega_{{r+1}} \cdots \omega_{{r+s}}$ appear in their original order, e.g.
$$
\omega_1\omega_2*\omega_3= \omega_1\omega_2\omega_3+ \omega_1\omega_3\omega_2+ \omega_3\omega_1\omega_2 .
$$
Let $\K$ be a field extension of the field $\k$.
\begin{defi}\rm
 \label{integral}
An
iterated integral is a map

\begin{equation}\label{iterated}
\begin{array}{rcl}
\int:  F_X \times {\rm Ass}_X &\rightarrow& \K\\
     (\delta,\omega)&\mapsto& \int_{\delta}\omega
\end{array}
\end{equation}
which is $\k$-linear in the second variable and  satisfies the four axioms:
\begin{description}
\item[A1]
For every non-commutative polynomial $\omega \in {\rm Ass}_X$ and $\delta\in F_X$,
   $
 \int_{1}\omega  \in \k
 $
 is the constant term of $\omega$ and
 $
 \int_{\delta}1=1
 $
 for all $\delta\in F_X$.
 We use the convention
 $\omega_1\omega_2\cdots\omega_r=1$ for $r=0$.
 \item[A2]
For $\alpha,\beta\in F_X$ and $\omega_1,\omega_2,\ldots\omega_r\in {\rm Ass}^1_X$
$$
\int_{\alpha\beta}\omega_1\cdots\omega_r=\sum_{i=0}^r\int_{\alpha}
\omega_1\cdots \omega_i\int_{\beta}\omega_{i+1}\cdots\omega_r.
$$
 \item[A3]
For $\alpha\in F_X$ and $\omega_1,\omega_2,\ldots\omega_r\in {\rm Ass}^1_X$
$$
\int_{\alpha^{-1}} \omega_1\omega_2\cdots\omega_r=(-1)^r\int_{\alpha}\omega_r\cdots\omega_1.
$$
 \item[A4]
For $\alpha\in F_X$ and $\omega_1,\omega_2,\ldots\omega_{r+s}\in {\rm Ass}^1_X$ we have
\begin{equation}
\label{7mar06} \int_{\alpha} \omega_{1}\cdots
\omega_{r}\int_{\alpha}\omega_{{r+1}}\cdots\omega_{{r+s}}=\int_{\alpha} \omega_{1}\cdots \omega_{r} *
\omega_{{r+1}} \cdots \omega_{{r+s}},
\end{equation}
where
$$
\omega_{1}\cdots \omega_{r} * \omega_{{r+1}} \cdots \omega_{{r+s}}= \sum
\omega_{k_1}\omega_{k_2}\cdots\omega_{k_{r+s}}
$$
is the shuffle product of $\omega_{1}\cdots \omega_{r} $ and $ \omega_{{r+1}} \cdots \omega_{{r+s}}$.
 \end{description}
 \end{defi}
Let $\{\delta_1,\delta_2,\dots,\delta_m\}$ be a set which generates $F_X$ freely and
$\{\omega_1,\omega_2,\dots,\omega_m\}$ be  a basis of the $\k$-vector
space  ${\rm Ass}_X^1$.
 \textbf{A1}, \textbf{A2}, \textbf{A3} imply that every iterated integral
can be written as a polynomial in
\begin{equation}
\label{nakaisensei} \int_{\delta_j} \omega_{i_1}{\omega_{i_1}}\cdots \omega_{i_r},\ j=1,2,\ldots,m,\
i_1,i_2,\ldots,i_r\in\{1,2,\ldots, m\}.
\end{equation}
Therefore by \textbf{A4}   the map $(\ref{iterated})$ defines an iterated integral if and only if the numbers
$$
 \int_{\delta_j}\omega_{i_1}\omega_{i_2}\cdots \omega_{i_r}=a^j(i_1,\ldots, i_r)\in \k
 $$
satisfy the "shuffle relations"

 \begin{equation}\label{shuffle}
a^j(i_1,\ldots,i_r)a^j(i_{r+1},\ldots,i_{r+s})=\sum a^j(k_1,\ldots,k_{r+s}),
\end{equation}
 where $(k_1,\ldots,k_{r+s})$
 runs through all shuffles of $(i_1,\ldots, i_r)$ and $(i_{r+1},\ldots, i_{r+s})$.
The existence of such numbers $a^j(i_1,\ldots,i_r)$ is, however, not obvious.

 \begin{exam}\rm
 Let $\{\delta_1,\delta_2,\dots,\delta_m\}$ be a set which generates $F_X$ freely and
$\{\omega_1,\omega_2,\dots,\omega_m\}$ be  a basis of the $\k$-vector
space  ${\rm Ass}_X^1$.
 We set
\begin{equation}
\label{6july06} a^j(i_1,\ldots,i_n) = \int_{\delta_j}\omega_{i_1}\omega_{i_2}\cdots \omega_{i_n}=0 \hbox{ if at
least one of $\omega_{i_s}$ is not $\omega_j$},
\end{equation}
$$
a^j(j,\ldots,j)=\int_{\delta_j}\omega_j^n=\frac{1}{n!} .
$$
The verification that the numbers $a^j(i_1,\ldots,i_n)$ define an iterated integral is straightforward.
This iterated integral can be interpreted as Chen's iterated integral in the following way: We take  the
Ceyley  diagrams (see \cite{nakai}) which is the
topological space $Y:=\cup_{i=0}^{m-1}\Z^{i}\times\R\times\Z^{m-i-1}\subset \R^m$. Each element $\delta$ of the
free  group $F_X$ is represented by a path  $\tilde \delta$ in $Y$ with the starting point $0\in\R^n$ and the
end point in some element of $\Z^m$. Each element $\omega$ of ${\rm Ass}_X$ can be interpreted as a differential form
$\tilde \omega$ substituting $dy_i$ by $x_i$, where $(y_1,y_2,\ldots, y_m)$ is the coordinate system in $\R^m$.
Now, $\int_{\delta}\omega$ is the classical Chen's iterated integral $\int_{\bar\delta}\tilde\omega$.
\end{exam}

\begin{exam}\rm
As in the Introduction, let $\Gamma$ be  a punctured Riemann surface with free finitely generated fundamental group
$F_X=\pi_1(\Gamma,*)$.
Let  $\{\omega_1,\omega_2,\ldots,\omega_m\}$ be a collection of holomorphic
one-forms on $\Gamma$ such that their classes in the de Rham cohomology of $\Gamma$ form
a basis.
The Chen's iterated
integral
$$
a^j(i_1,\ldots,i_n) = \int_{\delta_j}\omega_{i_1}\omega_{i_2}\cdots \omega_{i_n},
$$
where $\delta_1,\delta_2,\ldots,\delta_m$ generates $F_X$ freely,
satisfies the shuffle relations (\ref{shuffle}), see \cite{hain1,hain}, and hence defines an iterated integral
in the sense of Definition \ref{integral} (in order to follow the terminology used in this text we may
identify $\omega_i$ with $x_i$).
\end{exam}
\section{The $\pi_1$ de Rham theorem}
\label{mainsection}
Let $\alpha,\beta \in F_X$ and $\omega\in {\rm Ass}_X^1=L_X^1$. Then \textbf{ A3} implies that every iterated integral
(\ref{integral}) satisfies
$$
\int_{\alpha\beta} \omega = \int_{\alpha} \omega + \int_{\beta} \omega, \int_{\alpha\beta\alpha^{-1}} \omega=
\int_{\alpha} \omega, \int_{(\alpha,\beta)} \omega = 0 .
$$
More generally,
\begin{equation}\label{k1}
\forall \alpha\in F^{n+1}_X, \omega\in {\rm Ass}_X^{m}, \mbox{  such that  } m\leq n \mbox{   holds true  }
\int_{\alpha} \omega = 0.
\end{equation} Therefore the iterated integral $\int$ induces a map
\begin{equation}\label{iterated1}
\begin{array}{rcl}
\int:  \gr^kF_X \times {\rm Ass}_X^k &\rightarrow& \C \\
     (\delta,\omega)&\mapsto& \int_{\delta}\omega
\end{array}
\end{equation}
which is $\Z$-linear in the first argument and $\k$-linear in the second argument.
Suppose that $\omega \in {\rm Ass}_X^k$ is a shuffle product of $\omega_1, \omega_2$:
$$\omega = \omega_1 * \omega_2,
\omega_1\in {\rm Ass}_X^{k_1}, \omega_2\in {\rm Ass}_X^{k_2}, k_1+k_2=k
$$
and $\alpha \in \gr^k F_X$. Then \textbf{A4} and (\ref{k1}) imply
$$
\int_{\alpha} \omega = 0 .
$$
Thus the vector space $S^k_X$ generated by  shuffle products
$$
S^k_X= \span\{ \omega_1 * \omega_2 : \omega_1\in {\rm Ass}_X^{k_1}, \omega_2\in {\rm Ass}_X^{k_2}, k_1+k_2=k\}
$$
is in the kernel of the bilinear map (\ref{iterated1}).
We are ready to formulate the $\pi_1$ de Rham theorem:
\begin{theo}
\label{main}
Let $\int$ be an iterated integral in the sense of Definition  \ref{integral}.
\begin{description}
    \item[(a)]
If the induced bilinear map
\begin{equation}\label{pi11}
\begin{array}{rcl}
\int:  \gr^k F_X \times L_X^k &\rightarrow& \K\\
     (\delta,\omega)&\mapsto& \int_{\delta}\omega
\end{array}
\end{equation}
is non-degenerate for $k=1$,  then then it is non-degenerate for all
    $k\in \N$.
    \item[(b)] Let $\omega \in {\rm Ass}^k_X$. The linear map
    \begin{equation}\label{pi12}
\begin{array}{rcl}
\int\omega :  \gr^k F_X  &\rightarrow& \K \\
     \delta&\mapsto& \int_{\delta}\omega
\end{array}
\end{equation}
is the zero map, if and only if $\omega \in S_X^k$.
\end{description}
\end{theo}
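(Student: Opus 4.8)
The plan is to translate the four axioms into the classical language of the Magnus--Chen expansion and then to read off both assertions from Ree's theorem. Equip ${\rm Ass}_X$ with the symmetric bilinear form $\langle\cdot,\cdot\rangle$ for which all words $x_{i_1}\cdots x_{i_r}$ are orthonormal; it is graded and perfect in each degree. To an iterated integral $\int$ I attach the formal series $\Psi(\delta)=\sum_w\bigl(\int_\delta w\bigr)w\in\widehat{{\rm Ass}_X}$, the sum running over all words $w$, so that $\int_\delta\omega=\langle\Psi(\delta),\omega\rangle$ by $\k$-linearity in the second variable. The axioms acquire transparent meanings: by \textbf{A1} the constant term of $\Psi(\delta)$ equals $1$; by \textbf{A2}, $\Psi\colon F_X\to\widehat{{\rm Ass}_X}$ is multiplicative for the concatenation product; \textbf{A3} gives $\Psi(\delta^{-1})=\Psi(\delta)^{-1}$; and \textbf{A4}, which says that $\int_\delta$ is a character of the shuffle algebra $({\rm Ass}_X,*)$, dually means that $\Psi(\delta)$ is group-like for the unshuffle coproduct $\Delta$ (the coproduct dual to $*$), whose primitive elements are exactly the Lie elements $L_X$. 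Thus $\Psi$ is a homomorphism of $F_X$ into the group of group-like series, and $\log\Psi(\delta)$ is a Lie series.

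First I would extract the leading term. If $\delta\in F_X^k$ then (\ref{k1}) forces $\Psi(\delta)=1+L_k(\delta)+(\text{degree}>k)$, and the lowest-degree homogeneous part of a group-like series is primitive, so $L_k(\delta)\in L_X^k$. This gives a $\k$-linear map $L_k\colon\gr^kF_X\otimes_\Z\k\to L_X^k$, and since $\Psi$ is a homomorphism the leading term of a group commutator is the Lie bracket of the leading terms; hence $L=\bigoplus_kL_k$ is a homomorphism of graded Lie algebras. On generators $L(\phi(x_i))=L_1(\delta_i)=\sum_ja_{ij}x_j$ with $a_{ij}=\int_{\delta_i}x_j$, so $L\circ\phi$ is the endomorphism of the free Lie algebra $L_X$ determined by the matrix $A=(a_{ij})$. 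Non-degeneracy of (\ref{pi11}) at $k=1$ means precisely that $A$ is invertible; then $L\circ\phi$ is an automorphism, and as $\phi$ is an isomorphism, each $L_k\colon\gr^kF_X\otimes_\Z\k\to L_X^k$ is a linear isomorphism.

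Because the form is graded, for $\delta\in F_X^k$ and homogeneous $\omega\in{\rm Ass}_X^k$ one has $\int_\delta\omega=\langle\Psi(\delta),\omega\rangle=\langle L_k(\delta),\omega\rangle$. Thus, under the isomorphism $L_k$, both induced maps (\ref{pi11}) and (\ref{pi12}) become the restriction of the word pairing to $L_X^k\times{\rm Ass}_X^k$, and I can invoke Ree's theorem: over a field of characteristic zero $S_X^k=(L_X^k)^\perp$ in ${\rm Ass}_X^k$. For \textbf{(b)}, the inclusion $S_X^k\subset\ker$ is already established before the statement; conversely $\int\omega\equiv0$ on $\gr^kF_X$ means $\langle\ell,\omega\rangle=0$ for every $\ell\in L_X^k$ (surjectivity of $L_k$), i.e. $\omega\in(L_X^k)^\perp=S_X^k$. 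For \textbf{(a)}, the pairing (\ref{pi11}) is non-degenerate iff the word form restricts non-degenerately to $L_X^k$, i.e. iff $L_X^k\cap S_X^k=0$; over $\Q$ the form is positive definite, so $\ell\in L_X^k\cap S_X^k$ gives $\langle\ell,\ell\rangle=0$ whence $\ell=0$, and this decomposition base-changes to $\k$. Hence non-degeneracy at $k=1$ propagates to every $k$.

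I expect the main obstacle to be the second paragraph: establishing rigorously that the leading-term map $L$ is a homomorphism of graded Lie algebras onto $L_X$, and that it is an isomorphism exactly when $A$ is invertible. This is where the group structure of $F_X$, the commutator (Baker--Campbell--Hausdorff) computation of leading terms, and the identification of the primitives of the group-like series with the Lie elements must all be reconciled; by contrast the final combinatorial step is delivered directly by Ree's theorem.
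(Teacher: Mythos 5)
Your argument is correct, and it rests on the same two pillars as the paper's proof --- reduction of the iterated-integral pairing to the canonical word pairing $\langle\cdot,\cdot\rangle$ on ${\rm Ass}_X^k$, followed by Ree's theorem (Theorem \ref{prop1}) and positive definiteness in characteristic zero --- but it reaches that reduction by a different mechanism. The paper uses the $k=1$ non-degeneracy to \emph{diagonalize} first: it picks $\Omega\subset {\rm Ass}_X^1$ dual to fixed free generators of $F_X$, so that without loss of generality $\int_{x_i}x_j=\langle x_i,x_j\rangle$, and then proves Proposition \ref{prop2}, namely $\int_\delta\omega=\langle\omega,\phi^{-1}\delta\rangle$, by induction on $k$, expanding $\int_{(a,b)}x_{j_1}\cdots x_{j_k}$ with \textbf{A2}; part (a) then follows with the explicit witness $\omega=\phi^{-1}(\delta)$, and part (b) from Theorem \ref{prop1}. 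You never normalize: you package $\int$ into the Magnus--Chen series $\Psi$, read the axioms Hopf-theoretically (\textbf{A2}: $\Psi$ is multiplicative; \textbf{A4}: $\Psi(\delta)$ is group-like, so its lowest-degree term is primitive, hence Lie by Friedrichs' criterion --- which is the dual form of Ree's theorem, so the same combinatorial input enters your proof twice), and absorb a general degree-one pairing matrix $A$ into the automorphism of the free Lie algebra it induces. Note that your key computation --- the leading term of a group commutator is the bracket of the leading terms --- is in substance identical to the induction step of Proposition \ref{prop2}. Your route buys basis-independence and a conceptual interpretation ($\Psi$ is a homomorphism into the group-like points of the completed Hopf algebra, i.e.\ the prounipotent completion of $F_X$); the paper's buys economy (no completion, no coproduct, a single use of Ree's theorem). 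Three small repairs for your write-up: since $\int$ takes values in $\K$, your $L_k$ lands in $L_X^k\otimes_\k\K$ and $A\in\GL_m(\K)$, so all identifications are over $\K$ (the paper's diagonalization step requires the same implicit base change); $\Psi(\delta^{-1})=\Psi(\delta)^{-1}$ follows already from \textbf{A1}--\textbf{A2} applied to $\delta\delta^{-1}=1$, whereas \textbf{A3} literally gives the antipode (signed reversal) formula; and it is worth stating explicitly that part (b), both in your proof (through surjectivity of $L_k$) and in the paper's, uses the $k=1$ non-degeneracy hypothesis --- it must, since (b) fails for the degenerate iterated integral that vanishes on every word of positive length, which does satisfy all four axioms.
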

In ${\rm Ass}_X$ we consider the canonical $\k$-bilinear symmetric product given by
 $$
\langle x_{i_1}x_{i_2}\cdots x_{i_r},  x_{j_1}x_{j_2}\cdots x_{j_s}\rangle= \left \{
\begin{array}{ll}
1 & i_1=j_1,\ldots, i_r=j_r, r=s \\
0 & \hbox{ otherwise }
\end{array}
\right.
$$
Before proving Theorem \ref{main} we shall need the following auxiliary result:
\begin{theo}
\label{prop1} $S_X^k$ and $L_X^k$ are orthogonal vector subspaces of ${\rm Ass}_X^k$ and for all $k\in \N$
$$
{\rm Ass}_X^k=L_X^k\oplus S_X^k.
$$
\end{theo}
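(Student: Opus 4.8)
The plan is to derive Theorem~\ref{prop1} from Ree's combinatorial theorem \cite{re58} together with an elementary positivity argument, the whole point being the duality between the shuffle product and the \emph{unshuffle coproduct}. Let $\Delta\colon {\rm Ass}_X\to {\rm Ass}_X\otimes {\rm Ass}_X$ be defined on a word by $\Delta(x_{i_1}\cdots x_{i_k})=\sum_{I}x_I\otimes x_{I^c}$, the sum running over all subsets $I\subseteq\{1,\dots,k\}$ and $x_I$ denoting the subword carried by the positions in $I$, taken in their natural order. With respect to the Kronecker form $\langle\,,\,\rangle$, for which the words are orthonormal, one checks the adjunction $\langle a,\ u*v\rangle=\langle \Delta a,\ u\otimes v\rangle$ for all words $a$ and all homogeneous $u,v$. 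Hence both ``being a proper shuffle'' and ``being orthogonal to every proper shuffle'' are governed by $\Delta$.

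First I would prove the easy inclusion $L_X^k\subseteq (S_X^k)^\perp$. The map $\Delta$ is a homomorphism for the concatenation product and sends each $x_i$ to $x_i\otimes 1+1\otimes x_i$; therefore every Lie element is primitive, $\Delta P=P\otimes 1+1\otimes P$ (Friedrichs' criterion, cf.\ \cite{se64}). For $P\in L_X^k$ and words $u,v$ of positive degree with $|u|+|v|=k$ the adjunction gives
$$
\langle P,\ u*v\rangle=\langle P,u\rangle\langle 1,v\rangle+\langle 1,u\rangle\langle P,v\rangle=0 ,
$$
because $\langle 1,u\rangle=\langle 1,v\rangle=0$; this is the asserted orthogonality $L_X^k\perp S_X^k$. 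The reverse inclusion --- that any element orthogonal to all proper shuffles is a Lie element --- is exactly Ree's theorem, which I would quote as a black box. Together they give $L_X^k=(S_X^k)^\perp$, and since $\langle\,,\,\rangle$ is non-degenerate on the finite-dimensional space ${\rm Ass}_X^k$ we obtain, by taking orthogonals once more, $S_X^k=(L_X^k)^\perp$, as well as the dimension identity $\dim L_X^k+\dim S_X^k=\dim {\rm Ass}_X^k$.

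It remains to make the sum direct. Since $S_X^k=(L_X^k)^\perp$, the intersection $L_X^k\cap S_X^k$ equals $L_X^k\cap (L_X^k)^\perp$, the radical of the restriction of $\langle\,,\,\rangle$ to $L_X^k$; in view of the dimension identity, the decomposition ${\rm Ass}_X^k=L_X^k\oplus S_X^k$ is thus equivalent to the non-degeneracy of the form on $L_X^k$. I would establish this by descent. The free Lie algebra carries a universal integral basis (a Hall or Lyndon basis) whose members expand into words with integer coefficients that do not depend on the ground field; hence $L_X^k$ is defined over $\Q$ and the Gram matrix $G$ of such a basis has entries in $\Z$, the same for every field of characteristic zero. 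Extending scalars to $\R$, the form $\langle\,,\,\rangle$ is the standard Euclidean inner product in the monomial basis, so it is positive definite on $L_X^k\otimes_\Q\R$ and $\det G>0$. As $\det G\neq 0$ is independent of $\k$, the form is non-degenerate on $L_X^k$ over any $\k$ of characteristic zero, its radical vanishes, and the direct sum follows.

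The main obstacle, modest but genuine, is precisely this last step. Over a field such as $\C$ one cannot infer $P=0$ from $\langle P,P\rangle=0$, so the triviality of $L_X^k\cap S_X^k$ does not follow from the form alone; it is the passage to the integral structure of the free Lie algebra, where positive definiteness over $\R$ is available, that makes the conclusion uniform in $\k$. This is the point I would treat with the most care.
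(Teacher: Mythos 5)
Your proof is correct, and its central input is the same as the paper's: Ree's theorem, translated through the Kronecker pairing. Indeed, your adjunction $\langle a,\,u*v\rangle=\langle \Delta a,\,u\otimes v\rangle$ is exactly the paper's identity $\langle \omega,\ x_{i_1}\cdots x_{i_r}*x_{j_1}\cdots x_{j_s}\rangle=\sum a(k_1,\ldots,k_{r+s})$, repackaged via the unshuffle coproduct. The differences lie in what you build around it, and they are worth noting. First, you prove the inclusion $L_X^k\subseteq (S_X^k)^\perp$ independently of Ree, from primitivity of Lie elements; the paper extracts both inclusions from Ree's ``if and only if'' at once. This costs nothing, since you then quote only the hard direction of Ree. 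Second, and more substantively: the paper's proof stops at the reformulation ``$\omega$ is a Lie element if and only if it is orthogonal to all shuffles'', i.e.\ $L_X^k=(S_X^k)^\perp$, and treats the direct sum ${\rm Ass}_X^k=L_X^k\oplus S_X^k$ as part of that ``geometric reformulation''. That last deduction is not formal: over an arbitrary field of characteristic zero (such as $\C$, or the field $\C(t)$ which the paper actually uses in \S 4), $L_X^k=(S_X^k)^\perp$ yields the dimension identity but not $L_X^k\cap S_X^k=0$, because the form could a priori be isotropic on $L_X^k$. Your descent to the integral structure --- a Hall basis defined over $\Z$, Gram determinant positive by positive definiteness over $\R$, hence nonzero in any $\k\supseteq\Q$ --- is precisely the step that closes this, and it is the only place where characteristic zero genuinely enters (the same issue the paper later waves at in the proof of its Theorem 1 with the remark $\sum|a(i_1,\ldots,i_k)|^2\neq 0$). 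So your argument is not merely a rederivation: it supplies, correctly identified and correctly handled, a step the paper's own proof leaves implicit.
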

This is a geometric reformulation of the following classical result of Ree \cite[Theorem 2.2]{re58}.
\begin{theo}
\label{ree}
 A polynomial
\begin{equation}\label{ree1}
\omega=\sum_{n>0}\sum a(i_1,i_2,\ldots,i_n)x_{i_1}x_{i_2}\cdots x_{i_n} \in {\rm Ass}_X
\end{equation}
is a Lie element (i.e. belongs to $L_X$) if and only if for all $(i_1,\ldots, i_r)$ and $(j_1,\ldots,j_s)$ we
have:
\begin{equation}\label{ree2}
\sum a(k_1,k_2,\ldots,k_{r+s})=0
\end{equation}
where $(k_1,\ldots, k_{r+s})$ runs through all shuffles of $(i_1,\ldots, i_r)$ and $(j_1,\ldots, j_s)$.
\end{theo}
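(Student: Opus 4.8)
The plan is to translate the combinatorial identity (\ref{ree2}) into the single statement that $\omega$ is \emph{primitive} for a suitable coproduct, and then to identify the primitive elements with $L_X$. With respect to the pairing $\langle\cdot,\cdot\rangle$, under which the words form an orthonormal basis, the coefficient $a(k_1,\dots,k_{r+s})$ equals $\langle \omega, x_{k_1}\cdots x_{k_{r+s}}\rangle$, so the left-hand side of (\ref{ree2}) is exactly
$$
\sum a(k_1,\dots,k_{r+s}) = \langle \omega,\ x_{i_1}\cdots x_{i_r} * x_{j_1}\cdots x_{j_s}\rangle .
$$
Hence (\ref{ree2}) holding for all index tuples of positive length is the same as $\langle\omega,S_X\rangle=0$. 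Since the shuffles of a length-$r$ and a length-$s$ tuple all have length $r+s$, this condition is graded, so we may assume $\omega$ homogeneous of some degree $k\ge 1$ and prove $\omega\in L_X^k \iff \omega\perp S_X^k$.

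Next I introduce the coproduct $\Delta\colon {\rm Ass}_X\to {\rm Ass}_X\otimes{\rm Ass}_X$, the unique homomorphism for the concatenation product with $\Delta x_i = x_i\otimes 1 + 1\otimes x_i$; concretely $\Delta$ sends a word to the sum over all ways of splitting its letters into an ordered pair of complementary subwords. Extending $\langle\cdot,\cdot\rangle$ diagonally to ${\rm Ass}_X\otimes{\rm Ass}_X$, a direct check on words gives the duality
$$
\langle u * v,\ w\rangle = \langle u\otimes v,\ \Delta w\rangle
$$
for all words $u,v,w$. Combined with the symmetry of the pairing this yields $\langle\omega, u*v\rangle = \langle u\otimes v, \Delta\omega\rangle$, so $\omega\perp S_X^k$ says precisely that the component of $\Delta\omega$ in $\bigoplus_{a,b\ge1}{\rm Ass}_X^a\otimes{\rm Ass}_X^b$ vanishes. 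As $\omega$ is homogeneous of degree $k\ge1$, this is equivalent to $\Delta\omega = \omega\otimes 1 + 1\otimes\omega$, i.e.\ to $\omega$ being \emph{primitive}. Thus Ree's theorem is reduced to the identity $\mathrm{Prim}({\rm Ass}_X) = L_X$.

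One inclusion is immediate. The generators $x_i$ are primitive, and if $a,b$ are primitive then, since $\Delta$ is an algebra homomorphism, $\Delta(ab) = ab\otimes1 + a\otimes b + b\otimes a + 1\otimes ab$, so the cross terms cancel in $\Delta(ab-ba)$ and $[a,b]$ is again primitive; hence $L_X\subseteq \mathrm{Prim}({\rm Ass}_X)$. This is the ``only if'' direction of Ree's theorem.

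The main obstacle is the reverse inclusion $\mathrm{Prim}({\rm Ass}_X)\subseteq L_X$, which is Friedrichs' criterion. Here I would exploit that ${\rm Ass}_X$ is the universal enveloping algebra $U(L_X)$ (see \cite{se64}) carrying exactly the coproduct $\Delta$ above, and invoke the Poincar\'e--Birkhoff--Witt theorem: over a field of characteristic zero the primitive elements of $U(\mathfrak g)$ coincide with $\mathfrak g$. Concretely, after fixing a homogeneous ordered basis of $L_X$, PBW furnishes a basis of ${\rm Ass}_X$ consisting of ordered monomials in these Lie generators; expanding a primitive $\omega$ in this basis and setting the interior part of $\Delta\omega$ equal to zero forces the coefficient of every monomial of length $\ge 2$ to vanish, leaving $\omega\in L_X$. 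A more hands-on alternative, avoiding PBW, is the Dynkin--Specht--Wever map $\theta_k(x_{i_1}\cdots x_{i_k})=[\dots[[x_{i_1},x_{i_2}],x_{i_3}]\dots,x_{i_k}]$: one checks that $\theta_k$ maps ${\rm Ass}_X^k$ into $L_X^k$, acts as multiplication by $k$ on $L_X^k$, and satisfies $\theta_k\omega = k\omega$ for primitive $\omega$, so that $\omega=\frac1k\theta_k\omega\in L_X^k$. Either way this is the only step using characteristic zero, and it completes the proof; Theorem \ref{prop1} then follows from the identity $L_X^k=(S_X^k)^\perp$.
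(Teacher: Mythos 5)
Your proposal is correct, but it proves more than the paper does: the paper never actually proves Theorem \ref{ree}. It imports it as a classical result, citing \cite[Theorem 2.2]{re58}, and the only argument written out in the text is the identity
$$
\langle \omega,\ x_{i_1}x_{i_2}\cdots x_{i_r}*x_{j_1}x_{j_2}\cdots x_{j_s}\rangle=\sum a(k_1,k_2,\ldots,k_{r+s}),
$$
which translates the shuffle relations (\ref{ree2}) into orthogonality to shuffle products and thereby establishes the \emph{equivalence} of Theorem \ref{ree} with Theorem \ref{prop1}. Your first step reproduces exactly this translation; everything after it is a genuine proof of the theorem that the paper omits. Your route --- dualize the shuffle product against the coproduct $\Delta$ determined by $\Delta x_i = x_i\otimes 1 + 1\otimes x_i$, identify orthogonality to $S_X^k$ with primitivity, and then prove ${\rm Prim}({\rm Ass}_X)=L_X$ (Friedrichs' criterion) via PBW or via the Dynkin--Specht--Wever operator --- is the standard modern Hopf-algebraic proof (essentially the treatment in Reutenauer's book \cite{reutenauer}, and close in spirit to Ree's own argument, which also passes through Friedrichs' criterion). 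What the paper's approach buys is brevity: the hard content is deferred to the literature. What yours buys is self-containedness, modulo the two classical inputs that you invoke but only sketch (that the primitive elements of $U(\mathfrak{g})$ are exactly $\mathfrak{g}$ in characteristic zero, respectively that $\theta_k$ lands in $L_X^k$ and acts as multiplication by $k$ on primitives); both are standard, and you correctly isolate characteristic zero as the essential hypothesis. One small caveat on your closing sentence: $L_X^k=(S_X^k)^\perp$ gives $\dim L_X^k+\dim S_X^k=\dim{\rm Ass}_X^k$ by non-degeneracy of the pairing, but the direct sum asserted in Theorem \ref{prop1} also requires $L_X^k\cap S_X^k=0$, and over an arbitrary field of characteristic zero $\langle\omega,\omega\rangle=0$ does not force $\omega=0$; one should add that both subspaces are spanned by vectors with rational coordinates in the word basis, so the intersection can be computed over $\Q$, where the form is positive definite. (The paper elides the same point.)
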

\begin{proof}[Proof of the equivalence of Theorem \ref{prop1} and Theorem \ref{ree}.]
Without loss of generality we can assume that $\omega$ in (\ref{ree1}) is homogeneous
of degree $n$.
The proof
follows from the equality:
$$
\langle \omega, x_{i_1}x_{i_2}\cdots x_{i_r}*x_{j_1}x_{j_2}\cdots x_{j_s}\rangle=
\sum a(k_1,k_2,\ldots,k_{r+s})
$$
where
$x_{i_1}x_{i_2}\cdots x_{i_r}*x_{j_1}x_{j_2}\cdots x_{j_s},\ r+s=n$ is a shuffle
element and $(k_1,\ldots, k_{r+s})$ runs through all shuffles of $(i_1,\ldots, i_r)$ and $(j_1,\ldots, j_s)$.
Note that we can formulate Theorem \ref{prop1} in the following way: The polynomial $\omega$
is a Lie element if and only if it is orthogonal to all shuffles
$x_{i_1}x_{i_2}\cdots x_{i_r}*x_{j_1}x_{j_2}\cdots x_{j_s},\ r+s=n$.
\end{proof}
%


%

To prove Theorem \ref{main} we note that as
 the map (\ref{pi11}) is non-degenerate for $k=1$, then it can be "diagonalized" as follows:
We fix a set of generators $\{\delta_1,\delta_2,\ldots,\delta_m\}$ for $F_X$ and  find
$\Omega=\{\omega_1,\dots,\omega_m\}\subset {\rm Ass}^1_X$ such that
$\int_{\delta_i}\omega_j=1$ if $i=j$ and $=0$ otherwise. Now, ${\rm Ass}_X$ is freely
generated by $\Omega$.
Therefore we shall suppose, without loss of generality, that
\begin{equation}\label{product}
\int_{x_i} x_j = \langle x_i,x_j\rangle = 0 \mbox{ if } i\neq j \mbox{  and  } 0 \mbox{  otherwise}.
\end{equation}

The formula (\ref{product}) generalizes as follows:
\begin{prop}
\label{prop2} We have
 \begin{equation}
 \label{phi}
 \int_{\delta}{\omega}=\langle \omega, \phi^{-1}\delta\rangle,\
 \forall \omega\in {\rm Ass}^k_X,\ \delta\in \gr^kF_X.
 \end{equation}
 where $\phi$ is the isomorphism (\ref{phimap}).
\end{prop}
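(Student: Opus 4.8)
The plan is to package all the numbers $\int_\delta\omega$ into a single group-like series and to read off $(\ref{phi})$ from its leading term. Work in the graded completion $\widehat{{\rm Ass}}_X=\prod_{k\ge 0}{\rm Ass}_X^k$ and, for each $\delta\in F_X$, set $\Theta(\delta)=\sum_w\bigl(\int_\delta w\bigr)\,w$, the sum running over all words $w$ in the $x_i$. By the definition of the canonical pairing one has $\langle\Theta(\delta),\omega\rangle=\int_\delta\omega$ for every $\omega\in{\rm Ass}_X$, so $(\ref{phi})$ becomes the assertion that the degree-$k$ component of $\Theta(\delta)$ equals $\phi^{-1}\delta$. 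I would first translate the axioms of Definition \ref{integral}: \textbf{A1} says the constant term of $\Theta(\delta)$ is $1$; \textbf{A2} is exactly the multiplicativity $\Theta(\alpha\beta)=\Theta(\alpha)\Theta(\beta)$, so $\Theta$ is a homomorphism of $F_X$ into the units of $\widehat{{\rm Ass}}_X$; and \textbf{A4}, which says that $\omega\mapsto\int_\delta\omega$ is a character of the shuffle algebra, is equivalent to $\Theta(\delta)$ being \emph{group-like} for the deconcatenation coproduct $\Delta$ dual to the shuffle product, i.e. $\Delta\Theta(\delta)=\Theta(\delta)\otimes\Theta(\delta)$.

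The second step identifies the leading term. A group-like element with constant term $1$ is the exponential of a primitive element, and an element is primitive for $\Delta$ precisely when it is orthogonal to all proper shuffles $u*v$; by Theorem \ref{ree} (equivalently Theorem \ref{prop1}) this is the same as being a Lie element. Hence $\Theta(\delta)=\exp(P_\delta)$ with $P_\delta=\log\Theta(\delta)\in\widehat{L}_X$. If $\delta\in F_X^k$ then, by $(\ref{k1})$, $\int_\delta w=0$ for every word of length $<k$, so $\Theta(\delta)$ has no terms in positive degree $<k$ and $P_\delta$ starts in degree $\ge k$; since $P_\delta^2$ then starts in degree $\ge 2k>k$, the degree-$k$ component of $\Theta(\delta)=\exp(P_\delta)$ equals the lowest homogeneous piece $P_\delta^{(k)}\in L_X^k$. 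As the pairing respects the grading, $\int_\delta\omega=\langle\Theta(\delta),\omega\rangle=\langle P_\delta^{(k)},\omega\rangle$ for $\omega\in{\rm Ass}_X^k$, and everything is reduced to proving $P_\delta^{(k)}=\phi^{-1}\delta$.

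The final step shows that the leading-term map $\psi:\gr^kF_X\to L_X^k$, $\delta\mapsto P_\delta^{(k)}$, coincides with $\phi^{-1}$. It is well defined (if $\delta\in F_X^{k+1}$ the series starts above degree $k$, so $P_\delta^{(k)}=0$) and additive, because for $\delta_1,\delta_2\in F_X^k$ the Baker--Campbell--Hausdorff correction to $\log\bigl(\Theta(\delta_1)\Theta(\delta_2)\bigr)$ is a sum of brackets starting in degree $2k$. I would then check that $\psi$ is a morphism of graded Lie algebras: applying BCH to the group commutator $\Theta(\delta_1)\Theta(\delta_2)\Theta(\delta_1)^{-1}\Theta(\delta_2)^{-1}$ with $\delta_1\in F_X^a$, $\delta_2\in F_X^b$, every iterated bracket other than the single term $[\,\log\Theta(\delta_1),\log\Theta(\delta_2)\,]$ has degree $>a+b$, so the degree-$(a+b)$ part of $\log\Theta((\delta_1,\delta_2))$ is $[\,P_{\delta_1}^{(a)},P_{\delta_2}^{(b)}\,]$; recalling that $(\delta_1,\delta_2)F_X^{a+b+1}=[\,\bar\delta_1,\bar\delta_2\,]$ this is exactly $\psi([\bar\delta_1,\bar\delta_2])=[\psi\bar\delta_1,\psi\bar\delta_2]$. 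On the generators, $(\ref{product})$ gives that the degree-$1$ part of $\Theta(x_i)$ is $\sum_j\bigl(\int_{x_i}x_j\bigr)x_j=x_i$, so $\psi(\bar x_i)=x_i=\phi^{-1}(\bar x_i)$. Since $\gr F_X\otimes_\Z\k$ is generated as a Lie algebra in degree one (part of $\phi$ in $(\ref{phimap})$ being an isomorphism), two Lie algebra morphisms agreeing on the $\bar x_i$ must coincide, whence $\psi=\phi^{-1}$ and $(\ref{phi})$ follows.

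I expect the main obstacle to be this last step, namely the Lie-homomorphism property: one must control the leading term of the logarithm of a group commutator, which is where the Baker--Campbell--Hausdorff bookkeeping (or, equivalently, a direct induction expanding $(\delta_1,\delta_2)$ by means of \textbf{A2} and \textbf{A3}) really enters. The identification ``group-like $=$ exponential of a Lie element'' is the other delicate point, but it is furnished by Theorem \ref{ree}; apart from these two inputs the argument is formal.
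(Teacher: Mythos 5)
Your proposal is correct, but it takes a genuinely different route from the paper's. The paper proves Proposition \ref{prop2} by a short direct induction on $k$: the case $k=1$ is the normalization (\ref{product}), and for $\delta=(a,b)$ with $a\in F_X^p$, $b\in F_X^q$, $p+q=k$, axiom \textbf{A2} (together with the vanishing (\ref{k1})) yields
$\int_{(a,b)}x_{j_1}\cdots x_{j_k}=\int_a x_{j_1}\cdots x_{j_p}\int_b x_{j_{p+1}}\cdots x_{j_k}-\int_b x_{j_1}\cdots x_{j_q}\int_a x_{j_{q+1}}\cdots x_{j_k}$, which by the induction hypothesis and the multiplicativity of the word pairing equals $\langle \phi^{-1}(a)\phi^{-1}(b)-\phi^{-1}(b)\phi^{-1}(a),\,x_{j_1}\cdots x_{j_k}\rangle=\langle\phi^{-1}((a,b)),\,x_{j_1}\cdots x_{j_k}\rangle$; $\Z$-linearity in $\delta$ finishes the proof. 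This is precisely the ``direct induction expanding $(\delta_1,\delta_2)$ by means of \textbf{A2} and \textbf{A3}'' that you mention only parenthetically as an alternative: the paper thereby avoids both Baker--Campbell--Hausdorff and Ree's theorem entirely (Ree is reserved for Theorem \ref{prop1} and part (b) of Theorem \ref{main}). Your generating-series argument --- $\Theta(\delta)$ group-like, take $\log$, extract the leading term, compare Lie-algebra morphisms on degree-one generators --- is the classical Chen/Magnus-expansion viewpoint; it costs heavier classical inputs (Friedrichs--Ree and BCH) but buys more: it exhibits the full series $\log\Theta(\delta)$ as a Lie series, and it makes the well-definedness and additivity of the map on $\gr^kF_X$ explicit, where the paper leaves the reduction to commutators implicit. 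One small correction: under the word-orthonormal pairing, the coproduct dual to the shuffle product is not the deconcatenation coproduct (that one is dual to concatenation) but the coproduct for which the letters $x_i$ are primitive; your actual use of it --- primitive if and only if orthogonal to all proper shuffles, hence a Lie element by Theorem \ref{ree} --- is the correct statement for that coproduct, so this is a labelling slip rather than a gap.
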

\begin{proof}
The proof is by induction on $k$. For $k=1$ it follows from (\ref{product}). If $\delta=(a,b)$, where $a\in
F^p_X, b\in F^q_X$, $p+q=k>1$, then \textbf{A2} implies
\begin{eqnarray*}
  \int_{(a,b)} x_{j_1}x_{j_2}\cdots x_{j_k} &=& \int_a x_{j_1}\cdots x_{j_p}\int_b x_{j_{p+1}}\cdots x_{j_k}\\
  & &- \int_b
x_{j_1}\cdots x_{j_q} \int_a x_{j_{q+1}}\cdots x_{j_k} \\
   &=& \langle\phi^{-1}(a), x_{j_1}\cdots x_{j_p}\rangle \langle\phi^{-1}(b), x_{j_{p+1}}\cdots x_{j_k}\rangle\\
   & & - \langle\phi^{-1}(b),
x_{j_1}\cdots x_{j_q} \rangle\langle\phi^{-1}(a) x_{j_{q+1}}\cdots x_{j_k}\rangle \\
   &=& \langle\phi^{-1}(a)\phi^{-1}(b), x_{j_1}\cdots x_{j_p} x_{j_{p+1}}\cdots x_{j_k}\rangle\\
   & & - \langle\phi^{-1}(b)\phi^{-1}(a),
x_{j_1}\cdots x_{j_q}  x_{j_{q+1}}\cdots x_{j_k}\rangle  \\
  &=& \langle\phi^{-1}(a)\phi^{-1}(b)-\phi^{-1}(b)\phi^{-1}(a),x_{j_1}x_{j_2}\cdots x_{j_k}\rangle\\
  &  =& \langle\phi^{-1}((a,b)), x_{j_1}x_{j_2}\cdots x_{j_k}\rangle.
\end{eqnarray*}
\end{proof}
\begin{rem}\rm
For an arbitrary iterated integral map, the same proof  implies that for every $\delta\in \gr^k F_X$ such
that
$$
\phi^{-1}(\delta) = \sum a(i_1,i_2,\ldots,i_k)x_{i_1}x_{i_2}\cdots x_{i_k} \in \gr^k L_X
$$
holds
\begin{equation}\label{calcul}
\int_\delta \omega_{j_1}\omega_{j_2}\cdots \omega_{j_k}= \sum_{i_1,i_2,\ldots,i_k}
a(i_1,i_2,\ldots,i_k)\int_{x_{i_1}}\omega_{j_1} \int_{x_{i_2}}\omega_{j_2}\dots\int_{x_{i_k}}\omega_{j_k}
\end{equation}
\end{rem}

\begin{proof}[Proof of Theorem \ref{main}.] The part (b) follows from Theorem \ref{prop1} and Proposition \ref{prop2}.
Let $\delta\in \gr^k F_X$ and
$$
\phi^{-1}(\delta) = \sum a(i_1,i_2,\ldots,i_k)x_{i_1}x_{i_2}\cdots x_{i_k} \in \gr^k L_X
$$
where in the above sum each non-commutative monomial $x_{i_1}x_{i_2}\cdots x_{i_k}$ is repeated only once. Then
$\delta\neq 0$ if and only if
$$\langle\phi^{-1}(\delta),\phi^{-1}(\delta)\rangle = \sum |a(i_1,i_2,\ldots,i_k)|^2\neq 0.$$
Therefore for every $\delta\in \gr^k F_X$ there exists $\omega\in L_X^k$, namely $\omega = \phi^{-1}(\delta)$,
such that
$$
\int_\delta \omega \neq 0 .
$$
\end{proof}
Here we have used strongly the fact that the characteristic of $\k$ is zero.
\begin{rem}\rm
\label{27june}
A canonical basis of the $\k$-vector spaces $L_X^k\cong \gr^k F_X$ is given by
basic commutators (see for instance \cite{ha59, se64}).
By definition of $\langle\cdot,\cdot \rangle$ if the number of some
$x_i, i=1,2,\ldots,m$ used in two basic commutators $\omega_1$ and $\omega_2$ are
different then $\langle \omega_1, \omega_2\rangle =0$. The basic commutators of
 weight $1$ and $2$ are dual to each other with respect to the bilinear map
 $\langle\cdot,\cdot\rangle$.
 However, this is not the case for weight $3$ and the number of generators $m$ bigger than $2$.
 For instance, we have
 $$
 \langle [y,[x,z]], [z,[x,y]]\rangle=2
 $$
 For $m=2$ the basic commutators of weight $3$ (resp. $4$) are orthogonal  to each other.
 In  $m=2$ and $r=5$ the orthogonality fails.
 There are two couples in which the  number of $x$ is equal to 2 (resp. 3). In fact we have
 $$
\langle [y,[x,[x,[x,y]]]], [[x,y], [x,[x,y]]]\rangle =-28,\
\langle  [y,[y,[x,[x,y]]]],[[x,y], [y,[x,y]]]\rangle=-14
 $$
 The computation of basic commutators and scalar products in the free Lie algebra $L_X$ are implemented in the symbolic algebra system {\sc Axiom}, see \cite{lambe} and
the webpage of the second name author
 \end{rem}
\section{Picard-Fuchs equations and iterated path integrals }
\label{picardfuchs}
Let $f$ be a polynomial of degree $d$ in two variables $x,y$
and suppose, for simplicity, that the highest order homogeneous piece $g$ of $f$ is non-degenerate (has an isolated critical point). Let also $C\subset \C$ be the set of the critical
values of $f$.  The cohomology fiber bundle
$\cup_{t\in \C\backslash C} H^1(\{f=t\},\C)$ and the corresponding Gauss-Manin connection,
for which flat sections are generated by sections with images in   $\cup_{ t\in \C\backslash C}
H^1(\{f=t\},\Z)$, is encoded
in the global Brieskorn module
$H=\frac{\Omega^1}{df\wedge \Omega^0+d\Omega^0}$, where $\Omega^i$ is the set of
polynomial $i$-forms in $\C^2$ as follows. We note first that
$H$ is a $\C[t]$-module ( $t.[\omega]=[f\omega]$) generated freely by

\begin{equation}
\label{bases}
\omega_{i}:=x^{i_1}y^{i_2}(xdy-ydx), i = (i_1,i_2) \in I
\end{equation}
where  $\{x^{i_1}y^{i_2},\  i\in I\}$
 is a basis of monomials for the $\C$-vector space
$\frac{\C[x,y]}{\langle f_x,f_y\rangle}$,   see
\cite{gav, ho06-6}.
 In particular the rank of $H$ equals the dimension of $H^1(\{f=t\},\Z)$ for generic $t$.
The Gauss-Manin connection
of the family of curves  $f(x,y)=t,\ t\in\C$ with respect to the parameter $t$ becomes an operator
$':H\to \frac{1}{\Delta}H$ which satisfies the Leibniz rule,
where $\Delta=\Delta(t)$ is the discriminant of the polynomial
$f(x,y)-t$. In the basis
$\omega=(\omega_i)_{i\in I}$
(written in a column) it is of the form
\begin{equation}
\label{18}
\omega'=\frac{1}{\Delta}A\omega,
\end{equation}
where $A$ is a $m\times m$ matrix with entries in $\C[t]$ and $m=\# I$.

The above construction has a natural generalization
based on the   $\pi_1$ de Rham theorem (Theorem \ref{main}), which we discuss briefly.
 Let
 $$F_t = \pi_1(\{f=t\},*),\ t\in\C\backslash C$$
  be the
fundamental group  of the fiber $\{f=t\}$ (it was denoted $F_X$ in \S \ref{mainsection}).
Consider  the trivial fiber bundle $\cup_{t\in\C\backslash C} \gr^k F_t\otimes_\Z \C$ and its  dual
  $\cup_{t\in\C\backslash C} \check{\gr}^k F_t\otimes_\Z \C$. Both of them have a canonical flat connexion
 defined as follows:

 Let $X=\{\omega_i\}_{i\in I}$, $\k=\C(t)$ and $\K$ the field of locally analytic
 multi-valued functions on $\C\backslash C$.
  The Gauss-Manin connection on ${\rm Ass}_X^1$ extends  canonically
 to a derivation operator
$$
\frac{\partial}{\partial t}=': {\rm Ass}_X\to \frac{1}{\Delta}{\rm Ass}_X
$$
which respects both the graduation of
${\rm Ass}_X$, the direct sum decomposition
${\rm Ass}_X=L_X\oplus S_X$, and satisfies the Leibniz rule  $(ab)'=a'b+ab',\ a,b\in {\rm Ass}_X$.
In particular for monomials in ${\rm Ass}_X$ it is given by:
$$
\omega_{i_1}\omega_{i_2}\cdots \omega_{i_r}\rightarrow
\sum_{j=1}^r\omega_{i_1}\omega_{i_2}\cdots\omega_{i_{j-1}}\omega_{i_j}'\omega_{i_{j+1}}\cdots\omega_r.
$$
The main property of the map $'$ is the following:
\begin{equation}
 \frac{\partial}{\partial t}\int_{\delta(t)}\omega=\int_{\delta(t)}\omega'
\end{equation}
for all $\omega\in {\rm Ass}_X^k$ and continuous family of closed loops $\delta(t)\in  F_t^k$.
The induced map
 $':L_X\to \frac{1}{\Delta}L_X$, where $L_X$ is the $\k$-vector space  $L_X\cong \frac{{\rm Ass}_X}{S_X}$, is the desired generalization
of the usual (algebraic) Gauss-Manin connexion.

%
\section{Non-persistence of Hamiltonian identity cycles}
\label{apandmo}
Let  $\mathcal{F}$ be a holomorphic foliation  with singularities on the plane $\C^2$.
 A cycle on a leaf of $\mathcal{F}$ is a non-zero free homotopy class of closed loops on this leaf.
 Let $\delta$ be a closed non-contractible loop contained in a  leaf and
denote the holonomy map associated to
$\delta$ by $h_\delta$.
The cycle represented by $\delta$ is said to be an  \emph{identity cycle} provided that $h_\delta$ is the
identity map, and \emph{Hamiltonian identity cycle} provided that $\mathcal{F}=\{ df=0\}$ for some polynomial
$f$. It is called a \emph{limit cycle} provided that $\delta$ corresponds to an isolated fixed point of $h_\delta $. Let
$\mathcal{F}^n$ be a polynomial foliation as above of degree $n$, and let $\delta$  be an identity cycle.

\textbf{Question:} \emph{Is there a sufficiently small degree $n$ perturbation of $\mathcal{F}^n$ which either destroys the cycle  $\delta$, or makes it a limit cycle ?}

In the case when $\mathcal{F}^n$ is a plane Hamiltonian foliation
the following theorem has been proved by Ilyashenko and Pyartli:
\begin{theo}
\label{maingeneric}( \cite[Theorem 2]{ilpy})
 For any Hamiltonian identity cycle $\delta$ in a given leaf of $\mathcal{F}^n= \{ df = 0 \},\ \ \deg(f)=n+1>3$, there exists a perturbation of this foliation in
the class of polynomial foliations of degree $n$ which destroys the identity  cycle .
\end{theo}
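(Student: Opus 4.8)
The plan is to reduce the destruction of $\delta$ to the non-vanishing of a single Melnikov function and then to produce the required perturbation by means of the $\pi_1$ de Rham theorem. First I observe that a Hamiltonian foliation $\{df=0\}$ has trivial holonomy along every loop lying in a fibre $f^{-1}(t)$, so that every such cycle $\delta$ is automatically an identity cycle; to \emph{destroy} it I only need a degree $n$ perturbation $df+\varepsilon\omega=0$ for which $h^\varepsilon_\delta\neq\mathrm{id}$. Writing $h^\varepsilon_\delta(t)=t+\varepsilon^kM_k(t)+o(\varepsilon^k)$, let $k$ be the unique integer with $\delta\in F_t^k\setminus F_t^{k+1}$. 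By (\ref{k1}) the lower order functions $M_1,\dots,M_{k-1}$ vanish identically, so it suffices to exhibit $\omega$, of degree at most $n$, with $M_k\not\equiv 0$.

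Next I would use the integral representation of $M_k$. Following Fran\c{c}oise's successive primitive algorithm in the form given by Gavrilov \cite{ga06}, the first non-vanishing Melnikov function is a length $k$ iterated integral $M_k(t)=\int_{\delta(t)}\omega^{[k]}$, where $\omega^{[k]}\in{\rm Ass}_X^k$ is a non-commutative word in the one-forms obtained from $\omega$ by the fibrewise primitives and the Gauss--Manin transport of \S\ref{picardfuchs}. Since $M_k$ depends only on the class $\bar\delta\in\gr^kF_t$, and since $S_X^k$ lies in the kernel of the pairing (\ref{iterated1}) by axiom \textbf{A4}, I may replace $\omega^{[k]}$ by its Lie part in $L_X^k$ without changing $M_k$. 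Thus $M_k\not\equiv 0$ becomes equivalent to $\int_{\bar\delta}\eta\neq 0$ for the Lie element $\eta=\eta(\omega)\in L_X^k$ attached to $\omega$.

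Now the $\pi_1$ de Rham theorem enters. Because $\deg f=n+1>3$, the fibre $f^{-1}(t)$ has positive genus, the polynomial one-forms of degree at most $n$ already generate $H^1_{\dR}(f^{-1}(t))$, and the $k=1$ pairing $\int:\gr^1F_t\times L_X^1\to\K$ is the classical non-degenerate de Rham pairing. Theorem \ref{main}(a) then guarantees that $\int:\gr^kF_t\times L_X^k\to\K$ is non-degenerate as well, so there is \emph{some} Lie element $\eta_0\in L_X^k$ with $\int_{\bar\delta}\eta_0\neq 0$. If the assignment $\omega\mapsto\eta(\omega)$, as $\omega$ ranges over degree $\le n$ one-forms, meets the open set $\{\eta:\int_{\bar\delta}\eta\neq 0\}$, I am done; for $k=1$ this is immediate, since there $\eta(\omega)=[\omega]$ ranges over all of $H^1_{\dR}$.

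The main obstacle is precisely this last realizability step for $k\ge 2$. The map $\omega\mapsto\eta(\omega)$ is not linear: it is a polynomial map of degree $k$ in the single form $\omega$, because $\omega^{[k]}$ is assembled from $k$ copies of $\omega$ and its primitives, so its image is not a linear subspace of $L_X^k$ and one cannot merely invert the pairing. I would handle this by polarizing $\omega^{[k]}$, reducing the required non-vanishing to a multilinear identity in the Brieskorn module and in its Gauss--Manin iterates, and then showing that for $\deg f>3$ these iterates span a subspace of $L_X^k$ large enough to detect every non-zero $\bar\delta$. This is exactly where the degree hypothesis is indispensable: the very same realizability can fail for $n=2$ and $k\ge 5$ (Example \ref{ex}), so the argument must exploit the higher genus and the larger Brieskorn module afforded by $\deg f=n+1>3$. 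Establishing this span statement is, I expect, the hard technical core of the proof.
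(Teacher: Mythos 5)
Your strategy cannot prove Theorem \ref{maingeneric}, and the paper itself explains why. First, a point of fact: the paper contains no proof of this statement at all; it is quoted from Ilyashenko and Pyartli \cite{ilpy}, whose argument rests on the monodromy group at infinity, not on Melnikov functions. What the paper does with the machinery you invoke is prove two strictly weaker results: Theorem \ref{th4}, in which the quantifiers are reversed (for a \emph{generic} pair $(f,\omega)$ there exists \emph{some} cycle in each $\gr^k F_t$ with $M_k\not\equiv 0$, rather than: for \emph{every} cycle there exists a perturbation), and Theorem \ref{elliptic}, which treats only $\deg f=3$ and commutator cycles. The paper states explicitly that the approach via the leading Poincar\'e--Pontryagin--Melnikov term is "a different approach to Conjecture \ref{conj3}" and that the expectation behind it fails in general.

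The genuine gap is exactly the step you defer as "the hard technical core": showing that the degree-$k$ polynomial map $\omega\mapsto\eta(\omega)$ meets, for every non-zero $\bar\delta\in\gr^kF_t$, the set of Lie elements not orthogonal to $\phi^{-1}(\bar\delta)$. This is not merely hard; it is false, and false precisely in the range $\deg f = n+1>3$ that the theorem covers. Example \ref{ex} is stated for \emph{every} $n\geq 2$ with $n^2$ distinct critical values --- you misquote it as pertaining only to $n=2$. There, taking $\alpha_1(t),\alpha_2(t)$ to be loops around two punctures of $\{f=t\}$ and the degree $5$ basic commutator $\gamma(t)=(((\alpha_1,\alpha_2),\alpha_1),(\alpha_1,\alpha_2))$, one gets $M_5\equiv 0$ for \emph{every} polynomial one-form $\omega$ of degree at most $n$, because the derived forms $\omega^{(k)}$, $k\geq 2$, have no residues. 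So for this Hamiltonian identity cycle your reduction (destroy $\delta$ follows from $M_k\not\equiv 0$ for some admissible $\omega$) yields nothing: the sufficient condition is unattainable, even though the cycle is in fact destroyed by \cite{ilpy}. Any argument along your lines would have to go beyond the leading term $M_k$ (the higher-order terms of $h^\varepsilon_\delta$ are no longer length-$k$ iterated integrals and are not controlled by the $\pi_1$ de Rham theorem), or abandon the perturbative expansion altogether, which is what Ilyashenko and Pyartli do.
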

Our approach to the above problem of destroying the Hamiltonian identity cycles is as follows. Let $f$ be a
degree $n+1$ polynomial  and let $\omega=Pdx + Q dy$ be a polynomial one-form in $\C^2$ of degree at most $n$,
$\deg P, \deg Q \leq n$. Consider the holomorphic foliation on  $\C^2$ defined by
\begin{equation}
\label{perturbed}
df+\epsilon \omega=0 , \epsilon \sim 0 .
\end{equation}
Let $t$ be a regular value of $f$ and $\delta_t\subset f^{-1}(t)$ be a continuous family of   non-contractible
closed loops. As in the preceding sections we denote by $F_t$ the fundamental group of the fiber
$\{f^{-1}(t)\}$. There is  an integer $k\geq 1$ such that $\delta(t)$ represents a not zero element in $\gr^k
F_t$. We consider
 the holonomy map $h_\delta^\epsilon$ along the path $\delta(t)$
\begin{equation}
\label{holonomy}
h_\delta^\epsilon(t)=t+\sum_{i=1}^\infty \epsilon^i M_i(t).
\end{equation}
It is known \cite{ga06,ho06-5} that
 $M_1=\cdots=M_{k-1}=0$ and $M_k$ is an iterated path integral
\begin{equation}
\label{baazdele}
M_k(t)=\int_{\delta(t)}\omega ( \underbrace{ \omega( \cdots (\omega   ( \omega)'}_{k-1 \hbox{ times }} )'\cdots)')'
\end{equation}
 where $\omega'$ is defined by (\ref{18}). It follows from \S \ref{noandde} that the iterated integral in
 (\ref{baazdele}) is well defined, as it depends only on the classes of $\omega^{(i)},\ i=1,2,\ldots,k-1$ in the global Brieskorn module
 $H$ .

 If the function $M_k$ is not identically zero then  its zeros correspond to limit cycles of the deformed foliation.
 In particular
 the deformation
$df+\epsilon \omega$  " destroys" the family of identity cycles $\delta(t)$ in the sense that the holonomy map
$h_\epsilon, \epsilon \neq 0$, is not the identity map. This leads to the following:

\textbf{Question:}\emph{ Let $\delta(t)$ be a continuous family of closed loops representing a non-trivial
element of $ \gr^k F_t$. Is there a sufficiently small degree $n$ perturbation of $\{ df = 0\}$, such that the
corresponding Poincar\'e-Pontryagin function $M_k$ is not zero ?}

A positive answer to this question implies that the holonomy map (\ref{holonomy}) is not the identity map. We
shall prove that:
\begin{description}
    \item[(i)] The answer to the above question is, in general, negative (Example \ref{ex}).
    \item[(ii)] If the loop $\delta(t)$ is generic, then the answer is positive (Theorem  \ref{th4}).
    \item[(iii)] If the loop is a commutator, $\delta(t)\in \gr^2 F_t$, and the polynomial $f$ is of degree three
    (case not covered by \cite{ilpy}), then the answer is positive (Theorem \ref{elliptic}).
\end{description}
For completeness we consider first the seemingly trivial case $k=1$ (this is well known to the specialists). We
have:
\begin{prop}
\label{M1}
Let $f$ be a  degree $n+1>1$ polynomial, whose degree $n+1$ homogeneous part is non-degenerate
(has an isolated critical point). For every fixed non-critical value $t\in \C$ and for every closed loop
$\delta \subset f^{-1}(t)$ representing a non-zero homology class in $H_0(f^{-1}(t),\Z)$, there is a polynomial
one-form $\omega=Pdx+Qdy$ of degree at most $n$, $\deg P, \deg Q \leq n$, such that
$$
M_1'(t) = \int_\delta \omega' \neq 0 .
$$
\end{prop}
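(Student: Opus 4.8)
The plan is to reduce the statement to the classical de Rham theorem (which is exactly the $k=1$ non-degeneracy hypothesis feeding Theorem \ref{main}) and to make the Gauss--Manin derivative $\omega'$ explicit through its Gelfand--Leray representative. First I would rewrite $M_1'$. Since $\delta\subset f^{-1}(t)$ lies in a single fiber and that fiber is a curve, $\omega$ restricts there to a closed $1$-form, so $M_1(t)=\int_{\delta(t)}\omega$ is well defined; by the main property of the operator $'$ (the displayed identity of \S\ref{picardfuchs}) one has $M_1'(t)=\int_{\delta(t)}\omega'$. Writing $\omega=Pdx+Qdy$ and $d\omega=g\,dx\wedge dy$ with $g=Q_x-P_y$, the class $\omega'$ is represented by the Gelfand--Leray form $\tfrac{g\,dx\wedge dy}{df}$, and $\deg g\le n-1$. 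Conversely every $g$ of degree $\le n-1$ is realized by some admissible $\omega$ (take $P=0$ and $Q$ a primitive of $g$, of degree $\le n$). Hence the assertion is exactly equivalent to: \emph{at the given regular value $t$ there is a polynomial $g$ with $\deg g\le n-1$ and $\int_{\delta}\tfrac{g\,dx\wedge dy}{df}\neq0$.}

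The main analytic input is then the classical de Rham theorem for the smooth affine curve $f^{-1}(t)$: the period pairing $H_1(f^{-1}(t),\Z)\times H^1_{\dR}(f^{-1}(t))\to\C$ is non-degenerate. By the Brieskorn description recalled in \S\ref{picardfuchs}, the classes of $\tfrac{x^{i_1}y^{i_2}\,dx\wedge dy}{df}$, where $x^{i_1}y^{i_2}$ runs over a monomial basis of $\C[x,y]/\langle f_x,f_y\rangle$, form a basis of $H^1_{\dR}(f^{-1}(t))$; the hypothesis that the top homogeneous part of $f$ is non-degenerate is what guarantees that $H$ is free of the expected rank and that the fiber is smooth for regular $t$. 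Since $\delta$ is nonzero in homology, at least one of these basis periods is nonzero, which already settles the problem were we allowed arbitrary degree.

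The crux, and the step I expect to be the genuine obstacle, is the degree restriction $\deg g\le n-1$: the Milnor-algebra basis involves monomials up to degree $2(n-1)$, whereas a degree $\le n$ perturbation only produces the lower half of the Jacobian algebra. I must therefore show that, at the fixed regular $t$, the restricted family $\{\,[\tfrac{g\,dx\wedge dy}{df}]:\deg g\le n-1\,\}$ still detects every nonzero $\delta$, i.e.\ that its period functionals separate $H_1(f^{-1}(t),\Z)$. The tool I would use is the $\C[t]$-module structure: on $f^{-1}(t)$ one has $f\equiv t$, so the identity $[\tfrac{fg\,dx\wedge dy}{df}]=t\,[\tfrac{g\,dx\wedge dy}{df}]$ lets one trade a factor of $f$ (degree $n+1$) for a factor of $t$ at the cost of a polynomial in $t$, while the relations coming from $d\Omega^0$ and $df\wedge\Omega^0$ are used to reduce $g$ modulo the Jacobian ideal. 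Combined with the non-degeneracy of the principal part, the aim is to bring each Gelfand--Leray class, at a fixed regular $t$, into the span of the low-degree ones.

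The delicate point — where the real work lies — is precisely whether this reduction can be carried out in degree $\le n-1$ rather than only in degree $\le 2(n-1)$, and whether it survives at the \emph{prescribed} value $t$ without degenerating; it is here that regularity of $t$ and the non-degeneracy of the highest-order part of $f$ must be invoked, and it is here that the naive dimension count is tightest, since the number of admissible monomials is smaller than $\operatorname{rank}H$. I would expect that the most degenerate configurations (for instance purely homogeneous $f$, where the $f\equiv t$ trick cannot lower the weight of a low-degree $g$) require separate attention, so the essential difficulty of the proof is to confirm that the degree-$(n-1)$ Gelfand--Leray classes still pair nontrivially with an arbitrary nonzero homology class at the given regular value.
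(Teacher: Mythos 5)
Your setup is correct and matches the paper's: reducing $M_1'$ to periods of Gelfand--Leray forms $\tfrac{g\,dx\wedge dy}{df}$ with $\deg g\le n-1$, and noting that every such $g$ comes from an admissible $\omega$, is exactly the right reformulation, and you have also correctly isolated where the difficulty sits. But the proof has a genuine gap precisely at that point, and the tool you propose cannot close it. The span of the classes $[\omega_i']$ with $|i|\le n-1$ has dimension $n(n+1)/2$, while $\dim H^1_{DR}(f^{-1}(t))=n^2$; for $n\ge 2$ this is a \emph{proper} subspace of the de Rham cohomology of the fiber. Consequently no algebraic reduction --- in particular not the $\mathbb{C}[t]$-module trick of trading $f\equiv t$ for factors of $t$, nor reduction modulo the Jacobian ideal --- can ``bring each Gelfand--Leray class into the span of the low-degree ones'': most classes simply do not lie there. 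An argument that stays inside algebraic (holomorphic) cohomology classes is structurally unable to prove that the low-degree periods separate $H_1(f^{-1}(t),\mathbb{Z})$.

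The missing idea in the paper is Hodge-theoretic and uses that $\delta$ is a \emph{real} cycle. One shows that the forms $\omega_i'$ with $|i|\le n-1$ span $H^0\bigl(\Gamma,\Omega^1(\infty_1+\cdots+\infty_{n+1})\bigr)$, the meromorphic one-forms on the compactification $\Gamma$ with at most simple poles at the $n+1$ points at infinity, while those with $|i|<n-1$ span the holomorphic ones $H^0(\Gamma,\Omega^1)$. The Hodge decomposition $H^1_{DR}(\Gamma)=H^0(\Gamma,\Omega^1)\oplus H^0(\Gamma,\bar\Omega^1)$ together with the dimension counts
$$
\dim H^0\bigl(\Gamma,\Omega^1(\textstyle\sum_i\infty_i)\bigr)=\dim H^0(\Gamma,\Omega^1)+n,\qquad
\dim H^1_{DR}(f^{-1}(t))=\dim H^1_{DR}(\Gamma)+n
$$
then gives
$$
H^1_{DR}(f^{-1}(t))={\rm Span}\{\omega_i':|i|\le n-1\}\ \oplus\ {\rm Span}\{\overline{\omega_i'}:|i|<n-1\},
$$
i.e.\ the low-degree classes together with the \emph{complex conjugates} of the holomorphic ones fill out all of $H^1_{DR}(f^{-1}(t))$. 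Now if all low-degree periods of $\delta$ vanished, then since $\int_\delta\overline{\eta}=\overline{\int_\delta\eta}$ for a closed loop $\delta$, the periods of the conjugate forms would vanish as well, so $\delta$ would pair to zero with a full basis of $H^1_{DR}(f^{-1}(t))$ and its homology class would be zero, a contradiction. This anti-holomorphic input is exactly what your purely algebraic reduction cannot supply.
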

The above proposition implies that $M_1(t)$ is not identically zero and hence the cycle $\delta$ is destroyed by
the perturbation (\ref{perturbed}). This also follows (with a \emph{Morse-plus} condition on $f$) from the following
theorem, proved by Ilyashenko:
\begin{theo}
 [\cite{ilyashenko69}]
Let $f$ be a Morse-plus bi-variate polynomial, $\delta(t)\subset H_1(f^{-1}(t),\Z)$  a continuous family of
vanishing cycles, and $\omega$ a polynomial one-form of degree at most $n$ as above. Then $M_1(t) =
\int_{\delta(t)} \omega$ is  identically zero, if and only if $\omega  $ is exact.
\end{theo}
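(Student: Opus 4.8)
The plan is to prove the two implications separately, the reverse one being immediate. Throughout, exactness is understood in the relative (Brieskorn) sense appropriate to fibre integrals: $\omega$ is \emph{exact} if $[\omega]=0$ in $H=\Omega^1/(df\wedge\Omega^0+d\Omega^0)$, equivalently $\omega=dg+h\,df$ for polynomials $g,h$. With this convention the reverse implication is trivial: on each fibre one has $df|_{f^{-1}(t)}=0$, so $\int_{\delta(t)}h\,df=0$, while $\int_{\delta(t)}dg=0$ because $\delta(t)$ is a cycle, whence $M_1\equiv 0$. The substance of the theorem is therefore the forward implication, which I would establish by a monodromy argument.

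For the forward implication, suppose $M_1(t)=\int_{\delta(t)}\omega\equiv 0$ on a disc of regular values. The function $M_1$ extends by analytic continuation along paths in $\C\setminus C$, and its continuation is again an abelian integral, namely $\int_{\delta^\gamma(t)}\omega$, where $\delta^\gamma$ is the image of $\delta$ under the monodromy attached to the loop $\gamma$. Since the continuation of the identically zero function is identically zero, I get $\int_{\delta^\gamma(t)}\omega\equiv 0$ for every element $\delta^\gamma$ of the monodromy orbit of $\delta$. Because $f$ is Morse-plus, each critical value $c_j$ carries a single Lefschetz vanishing cycle $\Delta_j$, and the local monodromy around $c_j$ acts by the Picard--Lefschetz formula $\gamma\mapsto\gamma\pm(\gamma\cdot\Delta_j)\,\Delta_j$, where $\gamma\cdot\Delta_j$ is the intersection number in $H_1(f^{-1}(t),\Z)$.

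The key step is then to show that the $\Z$-span of the monodromy orbit of the vanishing cycle $\delta$ is all of $H_1(f^{-1}(t),\Z)$. This follows from the Morse-plus hypothesis: the vanishing cycles $\Delta_1,\dots,\Delta_\mu$ generate $H_1(f^{-1}(t),\Z)$ and their intersection (Dynkin) graph is connected. Starting from $\delta=\Delta_{j_0}$ and applying Picard--Lefschetz transformations along edges of this graph, one produces, modulo cycles already in the span, nonzero multiples of the adjacent $\Delta_j$; connectivity then propagates this to every $\Delta_j$, so the orbit spans $H_1$ over $\Q$, hence over $\Z$ after clearing denominators. Since $M_1$ is $\Z$-linear in the cycle and vanishes on a spanning set, $\int_\gamma\omega\equiv 0$ for every $\gamma\in H_1(f^{-1}(t),\Z)$ and every regular $t$. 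By the classical de Rham theorem the period pairing $H_1(f^{-1}(t),\C)\times H^1_{\dR}(f^{-1}(t))\to\C$ is nondegenerate, so $[\omega]=0$ in $H^1_{\dR}(f^{-1}(t))$ for all regular $t$; by the freeness and structure of the Brieskorn module $H$ recalled in \S\ref{picardfuchs} this says exactly that $\omega=dg+h\,df$, i.e. $\omega$ is (relatively) exact.

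The main obstacle is the spanning step: establishing that the monodromy orbit of one vanishing cycle fills out the whole homology, equivalently that the Dynkin graph of the vanishing cycles is connected. It is here, and essentially only here, that the ``plus'' in Morse-plus (nondegeneracy of the leading homogeneous form, forcing the generic fibre to have the expected topology and the pencil to behave as a Lefschetz pencil) is genuinely used; without it the monodromy representation may be reducible and the conclusion can fail. An alternative to the orbit-spanning argument, closer to the Picard--Fuchs viewpoint of \S\ref{picardfuchs}, is to note that $M_1$ satisfies the Gauss--Manin (Picard--Fuchs) linear ODE with regular singular points at $C$ and to study the ramification of $\int_{\delta(t)}\omega$ at each $c_j$: an identically zero integral forces the vanishing of the Puiseux leading terms governed by $\Delta_j$, which again propagates through the monodromy to kill all periods.
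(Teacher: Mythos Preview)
The paper does not give a proof of this statement: it is quoted as a result of Ilyashenko with the reference \cite{ilyashenko69}, and the surrounding text only comments that the Morse-plus hypothesis cannot be dropped. So there is no ``paper's own proof'' to compare with.

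That said, your outline is essentially the classical argument due to Ilyashenko. The backward implication is indeed trivial once exactness is read in the relative (Brieskorn) sense $\omega=dg+h\,df$, which is the correct reading here. For the forward implication your steps are the right ones: analytic continuation of $M_1$ replaces $\delta$ by its monodromy images; Picard--Lefschetz around a critical value $c_j$ produces $\pm(\delta\cdot\Delta_j)\Delta_j$ in the span; connectedness of the Dynkin diagram of vanishing cycles propagates this to all $\Delta_j$; the vanishing cycles generate $H_1(f^{-1}(t),\Z)$; hence all periods of $\omega$ vanish identically, and freeness of the Brieskorn module forces $[\omega]=0$ in $H$, i.e.\ $\omega=dg+h\,df$.

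Two small remarks. First, ``Morse-plus'' in this paper means precisely that $f$ has $n^2$ \emph{distinct} critical values (see the sentence following the theorem); this already encodes both that the top homogeneous part is non-degenerate (so there are $n^2$ Morse critical points and the fibration is a genuine Lefschetz pencil at infinity) and that the critical values are pairwise distinct. It is exactly this hypothesis that makes the Dynkin diagram connected and the monodromy orbit of a single vanishing cycle span $H_1$; you identify this correctly as the only nontrivial point. Second, your ``over $\Z$ after clearing denominators'' is harmless but unnecessary: linearity of the period map in the cycle means spanning over $\Q$ already suffices to conclude that all periods vanish.
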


The condition that $f$ must be Morse-plus (has $n^2$ distinct critical values) can not be removed from
Ilyashenko's theorem.
\begin{proof}[Proof of Proposition \ref{M1}]Consider the one-forms $\omega_i'=
\frac{d\omega_i}{df}$ defined by (\ref{bases}). It is well known (and easy to check) that  when
 $|i|=i_1+i_2<n-1$, the $n(n-1)/2$ forms $\omega_i'$ generate the space of holomorphic differential one-forms on the compact Riemann surface $\Gamma=\overline{f^{-1}(t) }$
 $$H^0(\Gamma,\Omega^1)= \span\{\omega_i' : |i|=i_1+i_2 < n-1 \}
 $$
 while the $n(n+1)/2$ one-forms $\omega_i'$, $|i|=i_1+i_2 \leq n-1$, generate the space of meromorphic one-forms with at most a simple pole at infinity:
\begin{equation}
\label{h0}
 H^0(\Gamma,\Omega^1(\infty_1+\infty_2+\dots \infty_{n+1}))=  \span\{\omega_i': |i|=i_1+i_2 \leq n-1 \},
 \end{equation}
where $\infty_i$ are the $n+1$ distinct intersection points of $\Gamma$ with the line at infinity. This combined
to the Hodge decomposition
$$
H^1_{DR}(\Gamma)= H^0(\Gamma,\Omega^1) \oplus H^0(\Gamma,\bar{\Omega}^1)
$$
and the obvious equalities
$$
\dim H^0(\Gamma,\Omega^1(\infty_1+\infty_2+\dots \infty_{n+1}))= \dim H^0(\Gamma,\Omega^1) + n,
$$
$$
\dim H^1_{DR}(f^{-1}(t)) = \dim H^1_{DR}(\Gamma) +n
$$
implies
$$
H^1_{DR}(f^{-1}(t))= \span\{\omega_i': |i|=i_1+i_2 \leq n-1 \} \oplus \span\{\overline{\omega_i'} :  |i|=i_1+i_2 < n-1 \}.
$$
Let, for a fixed regular value $t$, $\delta \subset f^{-1}(t)$ be a closed loop representing a non-zero homology class in $H_0(f^{-1}(t),\Z)$.
If $\int_\delta \omega_i'=0, \forall i, |i|=i_1+i_2 \leq n-1$, then $\int_\delta \overline{\omega_i'} =0, \forall i, |i|=i_1+i_2 < n-1$ and hence the homology class of $\delta$ is zero. Proposition \ref{M1} is proved.
\end{proof}
Let, more generally, $\delta(t) \subset f^{-1}(t)$ be a continuous family of closed loops  representing a not zero element in $\gr^kF_t$, $k \geq 1$. Let
$V_n$ be the vector space of all pairs $(f,\omega) $, where
$f$ is a  bi-variate polynomial of degree at most $n+1$
and $\omega$ is a one-form of degree at most $n$.

\begin{theo}
\label{th4}
There is a dense open subset of $V_n$ with the following property:
for every pair $(f,\omega)$ of this open subset and for every integer $k\geq 1$ there exists a continuous family of
closed loops $\delta(t) \subset f^{-1}(t)$ representing a non-zero homology class in $\gr^kF_t$,   such that the corresponding
Poincar\'e-Pontryagin function $M_k(t)$ is not identically zero.
\end{theo}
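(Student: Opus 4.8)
The plan is to reduce the statement, via the $\pi_1$ de Rham theorem, to a purely combinatorial non-vanishing in the free associative algebra, and then to exhibit one explicit Lie element against which the integrand of (\ref{baazdele}) pairs non-trivially. Write that integrand recursively as $W_1=\omega$, $W_k=\omega\,(W_{k-1})'$, so that for $\delta\in\gr^kF_t$ one has $M_k(t)=\int_{\delta}W_k$. Working over $\k=\C(t)$, Theorem \ref{main}(b) says that the linear map $\int W_k:\gr^kF_t\to\K$ is identically zero precisely when $W_k\in S_X^k$; hence it suffices to prove that, for generic $(f,\omega)$, the class $W_k$ does \emph{not} lie in the shuffle space $S_X^k$. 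Granting this, there is a $\delta\in\gr^kF_t$ with $\int_\delta W_k\neq0$ in $\K$, and a flat section of the local system $\gr^kF_t$ through $\delta$ provides the required continuous family $\delta(t)$ with $M_k\not\equiv0$.

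The genericity condition I would impose is simply that $\omega$ and $\omega'$ be linearly independent in $H^1_{DR}(f^{-1}(t))$ for generic $t$. This is a non-empty Zariski-open, hence dense open, condition on $V_n$: its failure forces $\omega'=g(t)\omega$ in the Brieskorn module, cutting out a proper closed subvariety, and for $n\geq2$ it is plainly satisfied (for $n=1$ the quotients $\gr^kF_t$ vanish for $k\geq2$ and the assertion is vacuous). I would then extend $\{\omega,\omega'\}$ to a basis of $H^1_{DR}(f^{-1}(t))$ and use the associated canonical pairing $\langle\cdot,\cdot\rangle$ of Theorem \ref{prop1}, for which $L_X^k=(S_X^k)^{\perp}$. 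By Theorem \ref{prop1} it is enough to produce one $Q_k\in L_X^k$ with $\langle W_k,Q_k\rangle\neq0$, and I would take the left-normed bracket $Q_k=[[\cdots[\omega,\omega'],\omega'],\dots,\omega']$ of $\omega$ with $k-1$ copies of $\omega'$.

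The computation rests on two observations. First, every monomial occurring in $W_k$ begins with the letter $\omega$: this is immediate from $W_k=\omega\,(W_{k-1})'$, since a fresh $\omega$ is prepended at each stage and is never differentiated. Second, the coefficient $\lambda_k$ of the single word $\omega(\omega')^{\,k-1}$ in $W_k$ equals $1$ for all $k$. Indeed, because every word of $W_k$ starts with $\omega$, this coefficient equals the coefficient of the pure word $(\omega')^{\,k-1}$ in $(W_{k-1})'$; and the only way to produce $(\omega')^{\,k-1}$ by applying the Gauss-Manin derivation to a word of $W_{k-1}$ is to differentiate the leading $\omega\mapsto\omega'$ in the word $\omega(\omega')^{\,k-2}$, which yields the recursion $\lambda_k=\lambda_{k-1}$ with $\lambda_2=1$. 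The crucial and most delicate point—the one I expect to demand the most care—is exactly that the Picard-Fuchs relation never interferes with this bookkeeping: even once $k$ exceeds the rank of $H^1_{DR}(f^{-1}(t))$, so that the higher derivatives $\omega^{(j)}$ are forced to be linearly dependent, the distinguished coefficient $\lambda_k$ is computed solely from the leading $\omega\mapsto\omega'$ differentiations and is therefore insensitive to how $\omega''$ and the subsequent derivatives decompose in the chosen basis.

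Finally, since $Q_k$ involves only the two letters $\omega,\omega'$, and every word of $W_k$ of multidegree $(1,k-1)$ in $\{\omega,\omega'\}$ must—starting with $\omega$—coincide with $\omega(\omega')^{\,k-1}$, all other monomials of $W_k$ are orthogonal to $Q_k$ and the pairing collapses to $\langle W_k,Q_k\rangle=\lambda_k\,c_k$, where $c_k$ is the coefficient of $\omega(\omega')^{\,k-1}$ in $Q_k$. A one-line induction on the bracketing gives $c_k=1$, so $\langle W_k,Q_k\rangle=1\neq0$ and $W_k\notin S_X^k$. This furnishes a suitable family $\delta(t)$ for every $k\geq1$ on the \emph{fixed} dense open set where $\omega,\omega'$ are independent, which is precisely the assertion of the theorem; I emphasize that a single open set serves all $k$ simultaneously, so no issue of intersecting infinitely many genericity conditions arises.
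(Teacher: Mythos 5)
Your proposal is correct, but it reaches the conclusion by a genuinely different route than the paper. Both arguments share the same skeleton: reduce $M_k\not\equiv 0$ to the statement that the integrand of (\ref{baazdele}) is not a linear combination of shuffles, certify this by pairing against the left-normed bracket via the orthogonal decomposition ${\rm Ass}_X^k=L_X^k\oplus S_X^k$ of Theorem \ref{prop1}, and then invoke Theorem \ref{main} to produce the loop $\delta$. The difference is where and how the non-vanishing of that pairing is verified. The paper verifies it at a single, highly symmetric point: $f^0$ homogeneous, so that the Picard--Fuchs system diagonalizes ($t\omega_i'=w_i\omega_i$), with $\omega=\alpha_1\omega_1+\alpha_2\omega_2$; the pairing is then computed in closed form, $C_k(w_1,w_2)=(w_2-w_1)\prod_{i=1}^{k-2}\bigl(i-w_1-(i-1)w_2\bigr)$, and a choice of weights ($w_1\neq w_2$, $w_1<1$) makes it non-zero; genericity is recovered afterwards by the closedness of the locus $\{M_k\equiv 0\}$ --- an argument which, strictly read, yields one dense open set for each $k$ and leaves unaddressed the intersection over all $k$. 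You instead impose one explicit open condition ($\omega$ and $\omega'$ linearly independent over $\C(t)$ in the Brieskorn module), adapt the alphabet so that $\omega$ and $\omega'$ are themselves letters, and then the bookkeeping collapses: every word of $W_k=\omega\,(W_{k-1})'$ begins with the letter $\omega$, the Leibniz terms $c_w'\,w$ and the differentiations in positions $\geq 2$ cannot create $(\omega')^{\,k-1}$, so the unique word of multidegree $(1,k-1)$ in $\{\omega,\omega'\}$ occurring in $W_k$ is $\omega(\omega')^{\,k-1}$ with coefficient $1$, whence $\langle W_k,\,[[\dots[\omega,\omega'],\dots],\omega']\rangle=1$ independently of how the higher derivatives decompose. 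What you gain: no degeneration, no weight computation, and a single explicit open set serving all $k$ simultaneously, which in fact repairs the quantifier issue in the paper's formulation; what the paper's route gains is that everything is checked at one completely computable point, with an explicit formula showing exactly when the pairing can vanish. Two small caveats you should add: your open set must also be intersected with the locus where the top-degree part of $f$ is non-degenerate (this underlies the Brieskorn-module basis and formula (\ref{baazdele})), and the appeal to Theorem \ref{main}(b) tacitly uses the non-degeneracy of the $k=1$ pairing, i.e.\ the classical de Rham theorem for the letters you chose --- the same tacit step the paper makes; finally, both proofs implicitly assume $n\geq 2$, since for $n=1$ one has $\gr^kF_t=0$ for $k\geq 2$ and no admissible family $\delta(t)$ exists at all.
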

The following example shows that the  result of the above theorem can not be improved.
\begin{exam}\rm
\label{ex} Let $f$ be a  polynomial of degree  $n+1$,  with $n^2$ distinct critical values, where $n\geq 2$
(such polynomials form a Zarisky open set in the space of degree $n+1$ polynomials). The generic leaf $
\{f=t\}\subset \C^2$ is a compact Riemann surface with $n+1$ removed points. Let $\alpha_1(t),
\alpha_2(t)\subset \{f=t\}$ be two continuous families of loops which make one turn about two distinct removed
points on $ \{f=t\}$, and put $X= \{\alpha_1, \alpha_2\}$.
 The element
$$
 ((( \alpha_1, \alpha_2), \alpha_1), ( \alpha_1, \alpha_2) )
$$
is a basic commutator of degree $5$ \cite[p.23]{se64}, and hence represents a non-zero equivalence class in
$L^5_X=F_5/F_6$. Define the continuous family of loops
$$
\gamma(t) = ((( \alpha_1(t), \alpha_2(t)), \alpha_1(t)), ( \alpha_1(t), \alpha_2(t)) ).
$$
We claim that for every polynomial one-form of degree at most $n$, the Poincar\'e-Pontrygin function $M_5$ associated to the deformed foliation (\ref{perturbed}) and to the family of loops $\gamma(t)$ is identically zero. Indeed, according to (\ref{baazdele}), the function $M_5$ is a polynomial in $\int_{\alpha_i(t)} \omega^{(k)}$, $i=1,2$, $k=0,1,...,4$. However, for $k\geq 2$ the one-form $\omega^{(k)}$ has no residues. Therefore (\ref{baazdele}) takes the form
$$
M_k(t)= \int_{\gamma(t)} \omega \omega'\omega'\omega'\omega'
$$
and hence
\begin{eqnarray*}
M_5(t)&=& \int_{(( \alpha_1(t), \alpha_2(t)), \alpha_1(t))}\omega \omega'\omega' \int_{( \alpha_1(t), \alpha_2(t))} \omega'\omega'\\
&   &- \int_{( \alpha_1(t), \alpha_2(t))} \omega \omega' \int_{(( \alpha_1(t), \alpha_2(t)), \alpha_1(t))}\omega'\omega'\omega' \\
& \equiv & 0 .
\end{eqnarray*}
\end{exam}
\begin{proof}[Proof of Theorem \ref{th4}]
Let $f^0$ be a degree $n+1$ polynomial, non-degenerate at infinity, and $\omega^0$ a degree $n$ one-form.
Denote by $\delta(t) \subset \{  {f}^0= t \}$  a continuous family of closed loops  representing a not zero element in $\gr^kF_t$, $k \geq 1$. The  function $M_k(t)$, associated to the pair $(f,\omega)\in V_n$ and the family of loops $\delta(t) \subset f^{-1}(t)$ is well defined, at least for
$(f,\omega)$ sufficiently close to
$(f^0,\omega^0)$. Therefore the condition that $M_k(t)$ is identically zero defines a closed subset of $V_n$ (possibly equal to $V_n$). It remains to show that there exists a point $(f^0,\omega^0)\in V_n$ at which the corresponding function $M_k(t)$ is not identically zero. Let $f^0$ be a homogeneous, degree $n+1$ polynomial, which is non-degenerate at infinity. We shall show that there exists a one-form $\omega$ of degree at most $n$, and a continuous family of closed loops  representing a not zero element in $\gr^kF_t$, such that the associated function $M_k(t)$ is not identically zero.

The proof is an application of the $\pi_1$ de Rham theorem formulated in \S \ref{mainsection}.
Let $X$ be the set of $n^2$ co-homology classes defined by the one-forms $\omega_i$  in
$H^1_{DR}(f^{-1}(t))$, see (\ref{bases}) .
By making use of the Picard-Fuchs system satisfied by $\omega_i$ we can represent the Poincar\'e-Pontryagin function $M_k(t)$,
(\ref{baazdele}),
in the form
\begin{equation}
M_k(t)= \int_{\delta(t)} P_k(\omega),
\end{equation}
where $P_k(\omega) \in {\rm Ass}_X^k$ is a suitable degree $k$ non-commutative  polynomial.
If the function $M_k(t)$ were identically zero for all family of cycles $\delta(t)$, then the polynomial  $P_k(\omega)$ would belong, for every regular value $t$, to the space $S_X^k$ generated by shuffle products.
Let $V$ be the vector space generated by $\omega_i$'s.
It is easy to check that the set $\{\omega\in V\mid  P_k(\omega)\in S_X^k\}$ is algebraic. We are going to prove that it is not the whole $V$.

Taking into consideration the non-degeneracy of $f$ at infinity, we deduce that the  Picard-Fuchs system associated to $\omega_i$ is diagonal
$$
t \omega_i' = w_i \omega_i ,\mbox{  where  } w_i={\rm weight}(\omega_i):=
\frac{|i|+2}{n+1} .
$$
This allows to deduce an explicit formula for $P_k(\omega)$. Let, for instance,
$\omega = \alpha_1 \omega_1 +  \alpha_2 \omega_2$, where ${\rm weight}(\omega_i)= w_i$. One easily proves by induction that
\begin{equation}
\label{pk}
P_k(\omega) = \frac{1}{t^{k-1}} \sum_{i=0}^k \alpha_1^i \alpha_2^{k-i} \sum c_{i_1i_2\dots i_k} \omega_{i_1}\omega_{i_2}\dots \omega_{i_k}
\end{equation}
where
$$c_{i_1i_2\dots i_k}=
w_{i_k} (w_{i_k}+ w_{i_{k-1}}-1)\dots (w_{i_k}+ w_{i_{k-1}} +\dots +w_{i_2}-k+2)
$$
and in the second sum of (\ref{pk}), the index $i_1i_2\dots i_k$ runs through all the shuffles of
$$
\underset{\mbox{$i$ times}}{\underbrace{(1\dots1)}} \mbox{   and   } \underset{\mbox{$k-i$ times}}{\underbrace{(2\dots2)}} .
$$
Note that the first sum in (\ref{pk}) is with respect to all partitions $i+ (k-i)$ of $k$. This fact reflects a further decomposition of $S_X^k$ and $L_X^k$ in direct orthogonal sums corresponding to partitions of $k$. The polynomial $P_k(\omega)$ belongs to $S^k_X$ if and only if it is orthogonal to $L_X^k$ (Theorem \ref{prop1}). It suffices to show that the coefficient of at least one monomial $\alpha_1^i \alpha_2^{k-i}$ in (\ref{pk}) is not orthogonal to $L_X^k$. Take for instance the partition $k=1+(k-1)$. The coefficient of $\alpha_1 \alpha_2^{k-1}$ takes the form
\begin{eqnarray*}
P_k^1(\omega_1,\omega_2) &= &\omega_1\omega_2^{k-1} w_2(2w_2-1)\dots ((k-1)w_2-k+2)\\
& +& \omega_2\omega_1\omega_2^{k-2} w_2(2w_2-1)\dots ((k-2)w_2-k+3) ((k-2)w_2+w_1-k+2)\\
& &\vdots  \\
& + &  \omega_2^{k-1} \omega_1 w_1 (w_1 + w_2-1)\dots ((k-2)w_2+w_1-k+2) .
\end{eqnarray*}
The subspace of $L^k_X$ corresponding to the partition $k=1+(k-1)$ is one-dimensional and generated by
the Lie polynomial
$$
L_k^1(\omega_1,\omega_2)= [[\dots[[\omega_1,\omega_2],\omega_2],\dots , \omega_2],\omega_2] .
$$
Consider the scalar product
\begin{equation}
\label{scalar}
C_k(w_1,w_2) = \langle P_k^1(\omega_1,\omega_2), L_k^1(\omega_1,\omega_2)\rangle   .
\end{equation}
The identities
\begin{eqnarray*}
\langle L_k^1, \omega_{i_1} \omega_{i_2}\dots \omega_{i_k}\rangle  &=&\langle [L_{k-1}^1,\omega_2], \omega_{i_1} \omega_{i_2}\dots \omega_{i_k}\rangle  \\
&=&\langle L_{k-1}^1,\omega_{i_1} \omega_{i_2}\dots \omega_{i_{k-1}}\rangle  \langle \omega_2, \omega_{i_k}\rangle  \\
&   & - \langle \omega_2, \omega_{i_1}\rangle   \langle L_{k-1}^1,\omega_{i_2} \omega_{i_3}\dots \omega_{i_{k}}\rangle
\end{eqnarray*}
applied to the scalar product $C_k$ lead to the
 following recursive formula$$
C_k(w_1,w_2)= (w_2-w_1) C_{k-1}(w_1+w_2-1,w_2), k\geq 3, \;\;C_2(w_1,w_2)= w_2-w_1
$$
and hence
$$
C_k(w_1,w_2)=  (w_2-w_1) \Pi_{i=1}^{k-2} (i-w_1-(i-1)w_2) .
$$
Recall now that $w_i:= \frac{|i|+2}{n+1}$, where $|i|+2 \leq n-1$.
If we choose $w_1\neq w_2$ and $w_1 < 1$ then the
scalar product $C_k(w_1,w_2)$ is not zero. The Theorem is proved.
\end{proof}

\begin{theo}
\label{elliptic} Let $f$ be a  degree three polynomial, whose degree three homogeneous part is non-degenerate
(has an isolated critical point). Let $\delta (t)\subset \{f=t\}$ be a continuous family of loops representing a
non-zero element in $F^2_X/F^3_X$, where $X$ is a basis of the first homology group $H_1(\{f=t\},\Z)$. Then
there exists a one-form $\omega$ of degree at most two, such that
$$
M_2(t)=\int_{\delta (t)} \omega \omega' \not \equiv 0 .
$$
\end{theo}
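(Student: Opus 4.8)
\noindent The plan is to turn $M_2$ into a Wronskian of Abelian integrals and then invoke the classical theory of the elliptic Picard--Fuchs equation. First I would fix a basis $\{\delta_1,\delta_2,\delta_3,\delta_4\}$ of $H_1(\{f=t\},\Z)$ that is flat in $t$ (so that $\partial_t\int_{\delta_a}\omega=\int_{\delta_a}\omega'$, by the main property of $'$ in \S\ref{picardfuchs}), and let $X=\{x_1,\dots,x_4\}$ be the dual generators normalised as in (\ref{product}), i.e.\ $\int_{x_a}x_b=\langle x_a,x_b\rangle$. Then $\omega=\sum_a p_a x_a$ and $\omega'=\sum_a p_a'x_a$ with $p_a(t):=\int_{\delta_a(t)}\omega$, so that $\omega\omega'=\sum_{a,b}p_ap_b'\,x_ax_b$ in ${\rm Ass}_X^2$. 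Writing $\phi^{-1}(\delta)=\sum_{i<j}c_{ij}[x_i,x_j]\in L_X^2$, with $(c_{ij})\neq 0$ since $\delta\neq 0$ in $\gr^2F_X$, Proposition \ref{prop2} together with the orthogonality $S_X^2\perp L_X^2$ of Theorem \ref{prop1} and the identity $\langle x_ax_b,[x_i,x_j]\rangle=\delta_{ai}\delta_{bj}-\delta_{aj}\delta_{bi}$ gives
\[
M_2(t)=\int_{\delta}\omega\omega'=\langle\,\omega\omega'\,,\,\phi^{-1}(\delta)\,\rangle=\sum_{i<j}c_{ij}\bigl(p_ip_j'-p_jp_i'\bigr).
\]
Thus $M_2$ is a $\C$-linear combination of the Wronskians $W(p_i,p_j):=p_ip_j'-p_jp_i'$ of the periods of the single perturbation form $\omega$, and the theorem reduces to the claim that, for the given nonzero $(c_{ij})$, one can choose $\omega$ so that this combination is not identically zero.

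Next I would feed in the geometry of the elliptic fibre. The fibre $\{f=t\}$ is a smooth genus-one curve $\bar\Gamma$ with the three points $\infty_1,\infty_2,\infty_3$ at infinity removed, and the admissible one-forms of degree $\le 2$ realise the space of differentials of the first and third kind on $\bar\Gamma$, i.e.\ the meromorphic forms with at most simple poles at the $\infty_j$ and residues summing to zero. I would take $\delta_1,\delta_2$ to be a symplectic basis of $H_1(\bar\Gamma,\Z)$ and $\delta_3,\delta_4$ small loops about $\infty_1,\infty_2$. For a form $\omega$ whose class is the holomorphic differential, $p_1,p_2$ are two $\C$-independent solutions of the second-order Picard--Fuchs equation of the family, so by Abel's formula $W(p_1,p_2)=c/\Delta\not\equiv 0$ (with $c$ constant) as soon as the family is non-isotrivial, while $p_3=p_4=0$ because a holomorphic form has no residues. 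Hence the coefficient $c_{12}$ is always detectable; for the coefficients $c_{ij}$ involving a loop about a puncture I would pass to third-kind forms, for which $p_3,p_4$ are the non-constant residue integrals, and compute the relevant $W(p_i,p_j)$ explicitly from the Picard--Fuchs system.

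The main obstacle is to prove that the whole pairing is non-degenerate, i.e.\ that no nonzero $\delta\in\gr^2F_t$ is annihilated by $\sum_{i<j}c_{ij}W(p_i,p_j)$ for \emph{every} admissible $\omega$. Letting $\Pi:{\rm Ass}_X^2\to L_X^2$ be the projection along $S_X^2$, one has $M_2(t)=\langle\Pi(\omega\omega')(t),\phi^{-1}(\delta)\rangle$, and since $\langle\cdot,\cdot\rangle$ is non-degenerate (indeed positive-definite) on $L_X^2$ by the argument in the proof of Theorem \ref{main}, the claim is equivalent to the statement that the Lie-projections $\{\Pi(\omega\omega')(t):\omega\ \text{admissible},\ t\in\C\backslash C\}$ span $L_X^2$. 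I would establish this by combining the residue computation above with the action of the Picard--Lefschetz monodromy on $\gr^2F_t$, the span being monodromy-invariant and, by the previous paragraph, nonzero. I expect the delicate point to lie exactly here: $\wedge^2H_1$ is \emph{not} monodromy-irreducible, and the family must be assumed non-isotrivial --- in the isotrivial case (e.g.\ $f$ homogeneous) the two first-kind periods are proportional, so $W(p_1,p_2)\equiv 0$ and the argument genuinely breaks down. Carrying the residue Wronskians far enough to cover the symplectic-invariant direction of $\wedge^2H_1$ is the part that needs the real work.
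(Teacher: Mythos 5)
Your opening reduction is correct and is the same starting point as the paper's: writing $\phi^{-1}(\delta)=\sum_{i<j}c_{ij}[x_i,x_j]$ and using Proposition \ref{prop2} (equivalently axiom \textbf{A2}), one gets $M_2(t)=\sum_{i<j}c_{ij}\bigl(p_ip_j'-p_jp_i'\bigr)$, which for the cycle $[\alpha_1,\alpha_2]$ is exactly the $2\times 2$ determinant $F_\omega$ of the paper. The genuine gap is that everything after this reduction is deferred: proving non-vanishing for an \emph{arbitrary} nonzero $(c_{ij})$ is the theorem itself, and your last paragraph concedes it is not done. Two precise lemmas are missing, and together they \emph{are} the paper's proof. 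First, the monodromy step: your observation that the span of the projections of $\omega\omega'$ to $L_X^2$ is monodromy-invariant and nonzero cannot suffice, for the reason you give yourself --- $\wedge^2 H_1$ of the punctured fibre is a reducible monodromy module, so a nonzero invariant subspace need not be everything. What is needed is the paper's Proposition \ref{mon}: via the Picard--Lefschetz formula and the $D_4$ intersection diagram of the four vanishing cycles, for every nonzero $\delta\in F^2_X/F^3_X$ there is an integer polynomial $P$ with $P(h_1,h_2,h_3,h_4)(\delta)=k[\alpha_1,\alpha_2]$, $k\neq 0$, where $\alpha_1,\alpha_2$ are loops about two punctures; since identical vanishing of $M_2$ is inherited by all analytic continuations, this reduces the whole theorem to the single cycle $[\alpha_1,\alpha_2]$. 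Second, for that cycle one must actually exhibit $\omega$ with $F_\omega\not\equiv 0$; your ``compute the relevant $W(p_i,p_j)$ explicitly from the Picard--Fuchs system'' is a placeholder where the paper has an argument: the periods $\int_{\alpha_i(t)}\omega$ are residues, linear in $t$, so $F_\omega\equiv 0$ for all admissible $\omega$ would force the direction of $(\int_{\alpha_1}\omega,\int_{\alpha_2}\omega)$ to be independent of $\omega$ and of $t$, whence some constant-coefficient combination of $\omega_{10}',\omega_{01}'$ would be residue-free, hence holomorphic, contradicting the linear independence of $\omega_{00}',\omega_{10}',\omega_{01}'$ in the three-dimensional space $W$.

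Moreover, your conclusion that ``the family must be assumed non-isotrivial'' is incorrect, and it pinpoints where your approach turns the wrong way. The hypothesis of Theorem \ref{elliptic} allows homogeneous $f$ (the paper deliberately works with such $f$ in the proof of Theorem \ref{th4}); then the family is isotrivial and the first-kind Wronskian $W(p_1,p_2)$ does vanish identically, exactly as you say. But what breaks down is only your plan of detecting the coefficient $c_{12}$ with purely first-kind forms, not the theorem: the paper's route through $[\alpha_1,\alpha_2]$ involves residues only and is insensitive to isotriviality, and Proposition \ref{mon} transports the symplectic commutator onto puncture commutators (e.g.\ $(h_3-h_4)[\delta_1,\delta_2]=[\delta_1,\alpha_1-\alpha_2]$, using $\alpha_1=\delta_1-\delta_3$, $\alpha_2=\delta_1-\delta_4$), so no hypothesis beyond those stated is needed. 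Incidentally, even a direct detection of $[\delta_1,\delta_2]$ survives isotriviality if one mixes weights: for $\omega$ combining $\omega_{00}$ and $\omega_{10}$ the two periods are combinations of $t^{2/3}$ and $t$, and such a Wronskian vanishes only if the first-kind and third-kind period vectors are proportional, which one can rule out (e.g.\ for the Fermat cubic, using its order-three automorphism). In short: anchor the argument at $[\alpha_1,\alpha_2]$ and prove the two lemmas above; that is the paper's proof, and your proposal, as it stands, contains neither.
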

For a generic $f$ and a particular class of cycles $\delta(t)$ one can say that $M_2=0$ if and only if $\omega$ is exact. For more details on this see \cite{ho06-5}.
  Before proving the above theorem we need some preparation. For a generic $t$ the affine algebraic curve
$\{f=t\}$ is a topological torus with three removed points, and $f$ has four critical points. Let $h_i\in
{\rm Aut}(H_1(\{f=t_0\},\Z)),\ i=1,2,3,4$ be the associated monodromy operators ($t_0$ is a fixed regular value). Let
$\delta _i(t), \alpha_i(t)$, $i=1,2$, be continuous family of cycles in $H_1(\{f=t\},\Z)$ which form a basis. We
suppose moreover that $\delta _1(t), \delta _2(t)$ are vanishing cycles, while $\alpha_1(t), \alpha_2(t)$ are
homologous to zero on the compactified curve $\{f=t\}$. Without loss of generality we have
$h_i(\alpha_j)=\alpha_j$, $ h_i(\delta _i)=\delta _i$, and
$$
h_1(\delta _2)= \delta _2 + \delta _1, h_2(\delta _1)= \delta _1 -\delta _2 .
$$
Identifying $F^2_X/F^3_X$ to $L^2_X$, where
$$X= \{\delta _1, \delta _2, \alpha_1, \alpha_2 \}$$
we represent an element $\delta \in F^2_X/F^3_X$  as a linear combination of commutators
$$
[\delta _1,\delta _2], [\delta _i,\alpha_j], [\alpha_1,\alpha_2], \; \; i,j=1,2 .
$$
The monodromy operators $h_i$ induce automorphisms of $F^2_X/F^3_X$ denoted by the same letter.

\begin{prop}
\label{mon} Let $\delta (t)$ be a continuous family of loops representing a non-zero equivalence class in
$F^2_X/F^3_X$. There exists a polynomial $P$ in $h_i$ and with integer coefficients, such that
$$
P(h_1,h_2,h_3,h_4)(\delta )= k [\alpha_1,\alpha_2],
$$
where $k$ is a non-zero integer.
\end{prop}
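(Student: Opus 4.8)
The plan is to recast the statement as a socle computation in a second exterior power. Using the isomorphism $\phi$ of (\ref{phimap}) I identify $F^2_X/F^3_X$ with $L^2_X$, and I use the canonical identification $L^2_X\otimes\Q\cong\Lambda^2 V$ of the degree-two part of the free Lie algebra with the second exterior power of $V:=H_1(\{f=t\},\Q)$, the operators $h_i$ acting as $\Lambda^2$ of their action on $V$. Write $V=D\oplus A$ with $D=\span_\Q\{\delta_1,\delta_2\}$ and $A=\span_\Q\{\alpha_1,\alpha_2\}$, so that $[\alpha_1,\alpha_2]$ spans $\Lambda^2 A$. By the Picard--Lefschetz formula $h_i(\gamma)=\gamma+\langle\gamma,e_i\rangle e_i$ for the corresponding vanishing cycle $e_i$, the operator $N_i:=h_i-1$ satisfies $N_i^2=0$ on $V$; each $h_i$ is therefore unipotent, and $h_i^{-1}$ is a polynomial in $h_i$ on $V$ and hence on $\Lambda^2 V$. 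Thus the monoid ring and the group ring of $G:=\langle h_1,\dots,h_4\rangle$ coincide with $\Z[h_1,\dots,h_4]$, and after clearing denominators the Proposition becomes equivalent to the assertion that $\Lambda^2 A$ is an \emph{essential} $G$-submodule of $\Lambda^2 V$, i.e. that every nonzero $G$-submodule contains the line $\Lambda^2 A=\Q[\alpha_1,\alpha_2]$.

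Next I filter $\Lambda^2 V$ by $\alpha$-degree, $W_2:=\Lambda^2 A\subset W_1:=(D\otimes A)\oplus\Lambda^2 A\subset W_0:=\Lambda^2 V$. Since all $\alpha_j$ are $G$-fixed we have $\langle\alpha_j,e_i\rangle=0$, so $N_i$ annihilates $A$ and maps $D$ into $V$; consequently each $N_i$ can only raise the $\alpha$-degree and each $W_p$ is $G$-invariant. The associated graded pieces are the trivial module $\Lambda^2 A$, the module $D\otimes A\cong D^{\oplus 2}$, and the trivial module $\Lambda^2 D$, where $D$ is the standard two-dimensional representation of $\mathrm{SL}_2(\Z)$ through which $h_1,h_2$ act and which has no nonzero fixed vector. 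Because $\Lambda^2 A$ is one-dimensional and trivial, its being essential is the same as the socle of $\Lambda^2 V$ being this single line. The genuine obstruction is that $h_1,h_2$ alone already split $W_1=D_1\oplus D_2\oplus W_2$, with $D_j=\span_\Q\{[\delta_1,\alpha_j],[\delta_2,\alpha_j]\}$; under $\langle h_1,h_2\rangle$ there are invariant copies of $D$ meeting $W_2$ trivially, and these must be destroyed by $h_3,h_4$.

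The descent then uses the graded maps induced by $h_3-1$ (or $h_4-1$). Writing $e_3=e_3^D+e_3^A$, the $\alpha$-degree-one part of $(h_3-1)[\delta_1,\delta_2]$ is $\langle\delta_1,e_3\rangle[e_3^A,\delta_2]+\langle\delta_2,e_3\rangle[\delta_1,e_3^A]$, while the $\alpha$-degree-two part of $(h_3-1)[\delta_i,\alpha_j]$ is $\langle\delta_i,e_3\rangle[e_3^A,\alpha_j]$. Granting that at least one of $e_3,e_4$ has $e^A\neq 0$ together with a nonzero intersection with the $\delta_i$, both induced maps $\Lambda^2 D\to D\otimes A$ and $D\otimes A\to\Lambda^2 A$ are nonzero. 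For an arbitrary $v\neq 0$, let $p$ be the least $\alpha$-degree occurring in $v$: if $p=2$ then $v$ already spans $W_2$; if $p=1$, a preliminary adjustment by $h_1,h_2$ moves the leading part out of the kernel of the raising map and then $h_3-1$ sends $v$ into $W_2\setminus\{0\}$; if $p=0$, one application of $h_3-1$ (after an $\mathrm{SL}_2$-adjustment so the $\Lambda^2 D$-part survives) lowers to the case $p=1$. In every case a word in the $h_i$ carries $v$ to a nonzero element of $\Lambda^2 A$, so $\Lambda^2 A$ lies in the $G$-submodule generated by $v$, which is exactly what is required. I expect the main obstacle to be precisely the geometric input feeding this descent: identifying the vanishing cycles $e_3,e_4$ of the cubic and proving that at least one of them genuinely couples the toric part $D$ and the residue part $A$ (a nonzero $\alpha$-component together with nonzero intersection numbers), and checking the genericity needed to escape the kernels of the raising maps. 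This is a concrete Picard--Lefschetz computation on the elliptic fibration $\{f=t\}$ punctured at its three points at infinity, and it is the only non-formal ingredient.
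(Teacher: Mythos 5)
Your module-theoretic reformulation (every nonzero submodule of $\Lambda^2 V$ under the monodromy group must contain the line $\Q[\alpha_1,\alpha_2]$, descent along the filtration by $\alpha$-degree) is sound and is in substance the same mechanism as the paper's proof, but the proposal has two genuine gaps. The first is the one you flag yourself: the whole descent is conditional on granted Picard--Lefschetz data for $e_3,e_4$, which is exactly what the paper supplies and uses --- the Dynkin diagram of $f$ is of type $D_4$, with the explicit intersection matrix, $\alpha_1=\delta_1-\delta_3$, $\alpha_2=\delta_1-\delta_4$. Worse, the hypothesis you grant (``at least one of $e_3,e_4$ has $e^A\neq 0$ and nonzero intersection with the $\delta_i$'') is too weak to imply the Proposition. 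If, say, $e_3=\delta_1-\alpha_1$ but $e_4\in D$, then $U=\span\{[\delta_1,\alpha_1],[\delta_2,\alpha_1]\}$ is invariant under all four $h_i$: under $h_1,h_2$ because they preserve $D$ and fix $\alpha_1$, under $h_3$ because $[\delta_3,\alpha_1]=[\delta_1-\alpha_1,\alpha_1]=[\delta_1,\alpha_1]$, and under $h_4$ trivially; since $U\cap\Q[\alpha_1,\alpha_2]=0$, no polynomial in the $h_i$ carries $\delta=[\delta_2,\alpha_1]$ to $k[\alpha_1,\alpha_2]$ with $k\neq 0$. What is actually needed is that the $A$-components of the coupling vanishing cycles span $A$ (here $e_3^A=-\alpha_1$, $e_4^A=-\alpha_2$), and this must be proved from the geometry of the cubic, not assumed.

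Second, the pivotal step of your descent is impossible as stated, not merely in need of polish. You claim that after an $\langle h_1,h_2\rangle$-adjustment, $h_3-1$ sends a degree-one element into $W_2\setminus\{0\}$. Write $N_3:=h_3-id$; on $W_1$ it kills $[\delta_1,\cdot]$ and $[\alpha_1,\alpha_2]$ (since $(\delta_1\cdot\delta_3)=0$), and for $b=b_1\alpha_1+b_2\alpha_2$,
$$
N_3\,[\delta_2,b]=-[\delta_3,b]=-[\delta_1,b]+b_2[\alpha_1,\alpha_2],
$$
so the image lies in $W_2$ only if $b=0$, i.e.\ only if it is zero; this remains true after any preliminary adjustment, because adjustments keep the element in $W_1$. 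The repair is a cancellation your outline omits: pair $N_3$ with $N_1:=h_1-id$, which satisfies $N_1[\delta_2,b]=[\delta_1,b]$, so that
$$
(h_1+h_3-2\, id)\,[\delta_2,b]=[\alpha_1,b]=b_2[\alpha_1,\alpha_2],
$$
with $h_4$ in place of $h_3$ when $b_2=0\neq b_1$. This telescoping is precisely what the paper's identities (\ref{26})--(\ref{29}) implement; for a general $\delta$ one gets, for instance, $[\alpha_1,b]=\bigl((h_1-id)-(h_3-id)(h_2-id)(h_1-id)\bigr)\delta$. So your strategy is the right one --- it is the paper's --- but both the geometric input that powers it and the cancellation that makes the degree-raising step land in $\Q[\alpha_1,\alpha_2]$ are missing.
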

\begin{proof}
 The proof is straightforward. Let $\delta _i, \ i=1,2,3,4$ be vanishing cycles associated to the critical values
of $f$. The Dynkin diagram of $f$ is of type $D_4$ and we may suppose that the intersection matrix is
 $$
(\delta _i\cdot \delta _j)=
\left(%
\begin{array}{cccc}
   0 & 1 & 0 & 0 \\
  -1 & 0 & 1 & 1 \\
  0 & -1 & 0 & 0 \\
  0 & -1 & 0 & 0 \\
\end{array}%
\right),
$$
$\alpha_1=\delta _1-\delta _3$ and $\alpha_2=\delta _1-\delta _4$ and
$$h_i(\delta ) = \delta  -
(\delta \cdot\delta _i) \delta _i $$ (the Picard-Lefschetz formula). Every $\delta \in F^2_X/F^3_X$ is of the form
\begin{equation}\label{gamma}
\delta = [\delta _1,a] + [\delta _2,b]+ m [\delta _1,\delta _2]+n[\alpha_1,\alpha_2]
\end{equation}
where $m,n\in \Z$ and $a,b$ are integer linear combinations in $\alpha_1, \alpha_2$.
 We have
\begin{eqnarray}
 \label{26} (h_1-id)(\delta ) &=& [\delta _1,b] \\
 \label{27} (h_2-id)(\delta ) &=& -[\delta _2,a]
\end{eqnarray}
which shows that it is enough to prove the Proposition for $\delta $ as in (\ref{gamma}) with $m=n=0$ (provided
that either $a$ or $b$ is non-zero). In the case $m=0$ we have
\begin{eqnarray}
 \label{28} (h_3-id)(\delta ) &=& -[\delta _3,b] \\
 \label{29} (h_4-id)(\delta ) &=& -[\delta _4,b]
\end{eqnarray}
 Therefore by (\ref{26}),  (\ref{27}), (\ref{28}) there is a polynomial $P$ (a linear combination in
$h_1,h_3,h_4,id$) such that $P(\delta )=[\alpha_1,b]$ or $P(\delta )=[\alpha_2,b]$. If $b\neq 0$ the Proposition
is proved. If $b=0$ but $a\neq 0$ then using  (\ref{27}) we replace without loss of generality $\delta $ by
$[\delta _2,a]$ and conclude as above. It remains the case $a=b=0$ but $m\neq 0$. We have
\begin{eqnarray}
 \label{30} (h_3-id)(\delta ) &=& -[\delta _1,\delta _3] \\
 \label{31} (h_4-id)(\delta ) &=& -[\delta _1,\delta _4]
\end{eqnarray}
and hence $(h_3-h_4)(\delta )= [\delta _1,\delta _4-\delta _3]= [\delta _1,\alpha_1-\alpha_2]$. We may therefore
replace $\delta $ by $[\delta _1,\alpha_1-\alpha_2]$ and conclude as above. Proposition \ref{mon} is proved.
\end{proof}
\begin{proof}[Proof of Theorem \ref{elliptic}.]
 If $M_2(t)$ were identically zero then each analytic
continuation of $M_2(t)$ would be zero too. By Proposition \ref{mon} we conclude that the function
$$
F_\omega(t)= \int_{[\alpha_1(t),\alpha_2(t)]}\omega \omega'=
det \left(%
\begin{array}{cc}
  \int_{\alpha_1(t)} \omega & \int_{\alpha_2(t) } \omega\\
  \int_{\alpha_1(t)} \omega' & \int_{\alpha_2(t)} \omega'
\end{array}%
\right)
$$
is identically zero. We shall show that that there is a polynomial one-form $\omega$ of degree at most two, such
that $F_\omega(t)\not\equiv 0$.   Indeed, the affine curve $\{f=t\}$ is a topological torus with three removed
points $\infty_1, \infty_2, \infty_3$ and the vector space $W$, (\ref{h0}), of meromorphic one-forms with at
most simple poles at $\infty_i$ is three dimensional and generated by
$$\omega_{00}'= \frac{dx}{f_y}, \omega_{10}'=2\frac{x\, dx}{f_y},\omega_{01}'= 2 \frac{y \, dx}{f_y}$$
(see the proof of Proposition \ref{M1} and \cite[Lemma 3]{Gav4}) where
$$
d \omega_{00}= dx \wedge dy,d \omega_{10}=x \, dx \wedge dy, d \omega_{01}= y \, dx \wedge dy
$$
and $\omega'$ is the Gelfand-Leray residue  of  $d\omega$. Furthermore, the integrals $\int_{\alpha_i(t)}
\omega$ are just the residues of $\omega$ at $\infty_i$. These residues are linear in $t$ when $\omega \in W$.
The identity $F_\omega(t)\equiv 0$ (for every fixed $\omega\in W$) and the linearity of $\int_{\alpha_i(t)}
\omega$ imply that the direction of the vector
$$
(\int_{\alpha_1(t)} \omega , \int_{\alpha_2(t) }\omega)
$$
does not depend on $\omega\in W$ neither on $t$ (when it is defined). This shows that a suitable linear
combination  of $\omega_{10}',\omega_{01}'$ with constant coefficients has no residues and hence is a
holomorphic one-form. Therefore the one-forms $\omega_{00}', \omega_{10}',\omega_{01}'$ are linearly dependent,
which is a contradiction. Theorem \ref{elliptic} is proved.
\end{proof}

\def\cprime{$'$} \def\cprime{$'$} \def\cprime{$'$}


\end{document}